\documentclass[reqno]{amsart}

\usepackage[utf8]{inputenc}
\usepackage[english]{babel}

\usepackage{fullpage}

\usepackage{amsmath}
\usepackage{amsfonts}
\usepackage{amssymb}

\setcounter{MaxMatrixCols}{11}

\DeclareMathOperator{\Int}{Int}
\DeclareMathOperator{\adj}{adj}

\usepackage{colonequals}

\usepackage{graphicx}
\newcommand\spikey{\raisebox{-\dp\strutbox}{\includegraphics[height=\dimexpr\dp\strutbox+\ht\strutbox]{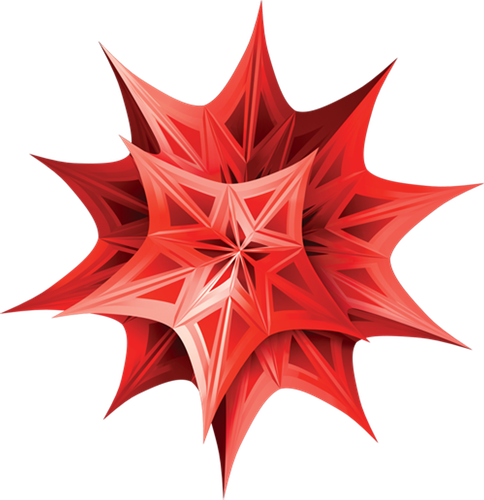}}}
\newcommand\fox{\includegraphics[height=\ht\strutbox]{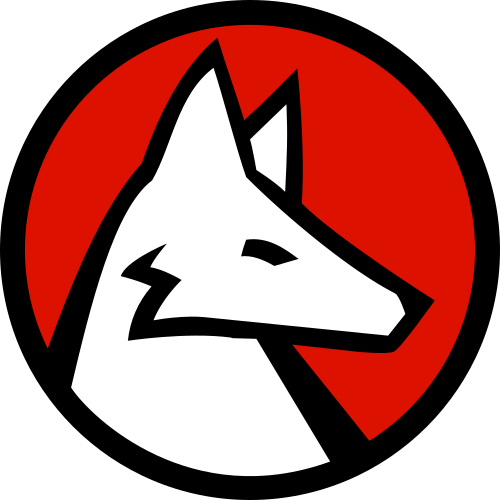}}
\makeatletter
\newcommand\WM[1]{\ifst@rred\tag*{\hspace*\@}\else\stepcounter{equation}\tag{\theequation}\fi
\g@addto@macro\df@tag{\llap{\rlap{\spikey~\textup{\ref{#1}}}\hspace*\linewidth}}}
\makeatother

\def\nth#1{\ifmmode#1\else$#1$\fi\textsuperscript{th}}

\usepackage{enumitem}
\usepackage[english=british]{csquotes}
\usepackage{url}

\DeclareUnicodeCharacter{00AE}{\raisebox{1ex}{\scalebox{.5}{\textregistered}}}

\usepackage{amsthm}
\usepackage{theoremref}

\theoremstyle{plain}

\newtheorem{theorem}{Theorem}[section]
\newtheorem{lemma}[theorem]{Lemma}
\newtheorem{corollary}[theorem]{Corollary}
\newtheorem{proposition}[theorem]{Proposition}
\newtheorem{assertion}[theorem]{Assertion}

\theoremstyle{definition}

\newtheorem{definition}[theorem]{Definition}
\newtheorem*{notation}{Notation}

\theoremstyle{remark}

\newtheorem*{remark}{Remark}
\pretocmd{\endremark}{\hfill$\blacktriangleleft$}{}{}

\makeatletter
\def\@cite#1#2{[\textbf{#1}\if@tempswa , #2\fi]}
\def\@biblabel#1{[\textbf{#1}]}
\makeatother

\begin{document}

\title{Differential characterization of quadratic surfaces}

\author[B. Zawalski]{Bartłomiej Zawalski}
\address{Institute of Mathematics, Polish Academy of Sciences}
\email{b.zawalski@impan.pl}
\thanks{This work was supported by the Polish National Science Centre grants nos. 2015/18/A/ST1/00553 and 2020/02/Y/ST1/00072. Numerical computations were performed within the Interdisciplinary Centre for Mathematical and Computational Modelling UW grant no. G76-28.}
\subjclass[2010]{53A05 (primary), 12H05, 12H20, 13N10 (secondary)}
\keywords{quadratic surfaces, ruled surfaces, generalized Wronskian, differential algebra, Gr\"obner basis, symbolic computation}

\begin{abstract}
Let $f\in W^{3,1}_{\mathrm{loc}}(\Omega)$ be a function defined on a connected open subset $\Omega\subseteq\mathbb R^2$. We will show that its graph is contained in a quadratic surface if and only if $f$ is a weak solution to a certain system of third-order partial differential equations unless the Hessian determinant of $f$ is non-positive everywhere on $\Omega$. Moreover, we will prove that the system is, in a sense, the simplest possible in a wide class of differential equations, which will lead to the classification of all polynomial partial differential equations satisfied by parametrizations of generic quadratic surfaces. Although we will mainly use the tools of linear and commutative algebra, the theorem itself is also somewhat related to holomorphic functions.
\end{abstract}

\maketitle

%%%%%%%%%%%%%%%%%%%%%%%%%%%%%%%%%%%%%%%%%%%%%%%%%%%%%%%%%%%%%%%%%%%%%%
\paragraph{\bf Acknowledgement}
%%%%%%%%%%%%%%%%%%%%%%%%%%%%%%%%%%%%%%%%%%%%%%%%%%%%%%%%%%%%%%%%%%%%%%

This preprint has not undergone peer review or any post-submission improvements or corrections. The Version of Record of this article is published in Beitr\"age zur Algebra und Geometrie / Contributions to Algebra and Geometry, and is available online at \url{https://doi.org/10.1007/s13366-024-00735-0}. This arXiv version contains a strengthened form of \thref{cor:01}, not included in the published journal version.

\tableofcontents

%%%%%%%%%%%%%%%%%%%%%%%%%%%%%%%%%%%%%%%%%%%%%%%%%%%%%%%%%%%%%%%%%%%%%%
\section{Introduction}
%%%%%%%%%%%%%%%%%%%%%%%%%%%%%%%%%%%%%%%%%%%%%%%%%%%%%%%%%%%%%%%%%%%%%%

It was already known since the works of Blaschke \cite[p.~18]{blaschke1923vorlesungen} that conics are the only planar curves with constant equiaffine curvature. In a special case of a graph of a function of class $C^5$, this condition is equivalent to a certain fifth-order ordinary differential equation, which reads
\begin{equation}\label{eq:07}9f''(x)^2f^{(5)}(x)-45f''(x)f^{(3)}(x)f^{(4)}(x)+40f^{(3)}(x)^3=0.\end{equation}
In higher dimensions, hyperquadrics are characterized by the Maschke-Pick-Berwald theorem \cite[Theorem~4.5]{nomizu1994affine} as the only hypersurfaces with vanishing cubic form $C$ defined in \cite[ch.~II, s.~4]{nomizu1994affine}. However, the definition implicitly uses the intrinsic Blaschke structure and thus the cubic form $C$ can hardly be expressed in an extrinsic coordinate system. It is also unclear what minimal smoothness we need to assume. Nevertheless, such a result for $2$-dimensional surfaces turns out to be a consequence of two relatively simple partial differential equations. The aim of this paper is to prove the following main theorem:

\begin{theorem}\thlabel{thm:01}
Let $f\in W^{3,1}_{\mathrm{loc}}(\Omega)$ be a function from the local Sobolev space\footnote{Fix $1\leq p\leq\infty$ and let $k\in\mathbb N$. The local Sobolev space $W^{k,p}_{\mathrm{loc}}(\Omega)$ consists of all locally integrable functions $f:\Omega\to\mathbb R$ such that for every multi-index $\boldsymbol\alpha$ with $|\boldsymbol\alpha|\leq k$, the mixed partial derivative $f^{(\boldsymbol\alpha)}$ exists in the weak sense and belongs to $L^p_{\mathrm{loc}}(\Omega)$ (cf. \cite[s.~5.2.2]{evans2010partial}).}, defined on a connected open subset $\Omega\subseteq\mathbb R^2$. Suppose that the Hessian determinant of $f$ is somewhere positive. Then $f$ is a weak solution to the system of partial differential equations
\begin{equation}\label{eq:05}
\begin{aligned}
f^{(3,0)} f^{(0,2) 2}-3 f^{(1,2)} f^{(2,0)} f^{(0,2)}+2 f^{(0,3)} f^{(1,1)} f^{(2,0)} &= 0, \\
f^{(0,3)} f^{(2,0) 2}-3 f^{(2,1)} f^{(0,2)} f^{(2,0)}+2 f^{(3,0)} f^{(1,1)} f^{(0,2)} &= 0
\end{aligned}
\end{equation}
if and only if its graph is contained in a quadratic surface.
\end{theorem}

Therefore \thref{thm:01} can be considered a $2$-dimensional analog of the aforementioned result of Blaschke. Contrary to Maschke-Pick-Berwald, it is formulated in terms of simple, explicit partial differential equations, with weaker smoothness assumptions. Moreover, we will show that the system \eqref{eq:05} is minimal in the sense that the left-hand sides form a minimal generating set (viz. a reduced Gr\"obner basis) of a certain differential ideal.\\

Such a characterization of quadratic surfaces of positive Gaussian curvature as the only solutions to certain partial differential equations without boundary conditions may be useful when one wants to prove that some specific convex body is an ellipsoid using, e.g., the tools of differential geometry. Such problems arise naturally in convex geometry, especially in various characterizations of Hilbert spaces among all finite-dimensional Banach spaces.\\

As superfluous as it may seem, the assumption on the Hessian determinant is not purely technical, as the following holds:

\begin{theorem}\thlabel{thm:02}
Let $f\in W^{3,1}_{\mathrm{loc}}(\Omega)$ be a function from the local Sobolev space, defined on a connected open subset $\Omega\subseteq\mathbb R^2$. Suppose that the Hessian determinant of $f$ is non-positive. Then $f$ is a weak solution to the system of partial differential equations \eqref{eq:05} if and only if $\Omega$ contains a countable sum of disjoint open connected subsets $\Omega_i$ such that:
\begin{enumerate}[label=\textup{(\makebox[0.625em]{\arabic*})}]
\item on each $\Omega_i$ the graph of $f$ is contained in either:
\begin{enumerate}[label=\textup{(\makebox[0.625em]{\alph*})}]
\item a doubly-ruled surface\footnote{A ruled surface that contains two families of rulings.}, or
\item a developable surface\footnote{A ruled surface having Gaussian curvature $K=0$ everywhere.}, or
\item a Catalan surface\footnote{A ruled surface all of whose rulings are parallel to a fixed plane, called the directrix plane of the surface.} with directrix plane $XZ$, or
\item a Catalan surface with directrix plane $YZ$,
\end{enumerate}
\item the union $\bigcup\Omega_i$ is dense in $\Omega$.
\end{enumerate}
\end{theorem}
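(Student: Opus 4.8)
The plan is to translate the system \eqref{eq:05} into an assertion about polynomial divisibility and then to read the geometry off from it. Set $p=f^{(2,0)}$, $q=f^{(1,1)}$, $r=f^{(0,2)}$, and introduce the quadratic $Q(t)=p+2qt+rt^{2}$ --- whose roots are real exactly because $H:=pr-q^{2}$, the Hessian determinant, is non-positive, and which are the asymptotic slopes --- together with the cubic $P(t)=f^{(3,0)}+3f^{(2,1)}t+3f^{(1,2)}t^{2}+f^{(0,3)}t^{3}$, whose vanishing at a slope $m$ is precisely the third-order condition $\tfrac{d^{3}}{dt^{3}}f(x_{0}+t,y_{0}+mt)=0$ for a straight line of slope $m$ to lie in the graph. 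A direct computation shows that on the open set $\{pr\ne0\}$ the two polynomials on the left of \eqref{eq:05} cut out the same locus as the two coefficients of the remainder of $P$ modulo $Q$, so that there \eqref{eq:05} is equivalent to the divisibility $Q\mid P$; on $\Int\{p=0\}$ (resp.\ $\Int\{r=0\}$), by contrast, both equations of \eqref{eq:05} hold identically, since every monomial contains one of the vanishing factors $p$, $f^{(3,0)}=\partial_{x}p$, $f^{(2,1)}=\partial_{y}p$ (resp.\ the symmetric triple).

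Next I would dispose of the three regimes. Differentiating the identity $Q(m)\equiv0$ along an asymptotic slope field $m$ gives $P(m)=-2(m_{x}+mm_{y})(q+mr)$. On $\Int\{H=0\}$, where $r\ne0$, the double root is $m_{0}=-q/r$ and $q+m_{0}r=0$, so this is vacuous; but differentiating instead the relation $pr=q^{2}$ and using the equalities of mixed partials $\partial_{y}p=\partial_{x}q$, $\partial_{y}q=\partial_{x}r$, one gets at once $P(m_{0})=0$, $P'(m_{0})=0$ --- hence $Q\mid P$, i.e.\ \eqref{eq:05} --- and $m_{0,x}+m_{0}m_{0,y}=0$; the symmetric computation covers $p\ne0$, and where $p=q=r=0$ the graph is planar. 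The vanishing of $m_{0,x}+m_{0}m_{0,y}$ says $m_{0}$ is constant along its own characteristics, so the projected integral curves of $m_{0}$ are straight segments, and since $\tfrac{d^{2}}{dt^{2}}f=Q(m)+(m_{x}+mm_{y})f_{y}=0$ along them, $f$ restricted to such a segment is affine and its lift is a straight line: the graph is ruled and flat, hence contained in a developable surface (case (b)). On $\Int\{p=0\}$ the graph is $z=a(y)x+b(y)$, a Catalan surface with directrix plane $XZ$ (case (c)); on $\Int\{r=0\}$ we obtain case (d). Finally, on $\{H<0\}$ the asymptotic slopes $m_{\pm}$ are distinct and real, \eqref{eq:05} amounts to $P(m_{+})=P(m_{-})=0$, and because $q+m_{\pm}r=\pm\sqrt{q^{2}-pr}\ne0$ the identity for $P(m)$ forces $m_{\pm,x}+m_{\pm}m_{\pm,y}=0$; each of the two slope fields then rules the graph by a family of lines, so the graph is doubly-ruled (case (a)). The converse implications are the same computations reversed: a developable piece has $H\equiv0$, a Catalan piece has $p\equiv0$ or $r\equiv0$, and over a doubly-ruled piece one differentiates the affine parametrisation of each ruling three times to get $P(m_{\pm})=0$, hence \eqref{eq:05}.

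It then remains to assemble the local pieces and prove density. Take for the $\Omega_{i}$ the connected components of $\{H<0,\,pr\ne0\}$, of $\Int\{p=0\}$, of $\Int\{r=0\}$, and of $\Int\{H=0\}$ --- after an inessential trimming along the (nowhere dense) mutual boundaries so that the $\Omega_{i}$ are disjoint and each of a single type. The previous paragraph identifies the type of the graph over each $\Omega_{i}$ and, conversely, shows that \eqref{eq:05} holds on $\bigcup\Omega_{i}$; since $\bigcup\Omega_{i}$ is dense and $f$ is of class $C^{1}$ (being $W^{3,1}_{\mathrm{loc}}$), the bookkeeping of the weak formulation then yields \eqref{eq:05} on all of $\Omega$. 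For the density of $\bigcup\Omega_{i}$ one argues by Baire category: were some ball $V\subseteq\Omega$ disjoint from all four sets above, then inside $V$ each of $\{H=0\}$, $\{p=0\}$, $\{r=0\}$ would have empty interior (otherwise that interior would lie in $\Int\{H=0\}$, $\Int\{p=0\}$ or $\Int\{r=0\}$), hence each of $\{H<0\}$, $\{p\ne0\}$, $\{r\ne0\}$ would be dense in $V$, and a point of their intersection --- which Baire's theorem, applied to fixed Borel representatives of $p,q,r$, provides --- would lie in $\{H<0,\,pr\ne0\}$, contradicting the choice of $V$.

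The genuine obstacle, I expect, is the step --- used both in the developable and in the doubly-ruled regime --- that passes from the pointwise relation $m_{x}+mm_{y}=0$ for the asymptotic slope field to an honest ruling of the graph by straight segments: that is, making the integration of characteristics rigorous when $f$ lies only in $W^{3,1}_{\mathrm{loc}}(\Omega)$, so that $m=-q/r$ (or $(-q\pm\sqrt{q^{2}-pr})/r$) is at best of Sobolev class --- in particular not continuous --- and even the open sets above must be read modulo null sets. This is precisely the technical heart on which \thref{thm:01} rests as well, and I would address it with the same machinery, namely mollification together with a coarea/Lagrangian argument identifying the level sets of $m$, off a null set, with line segments of the corresponding slope, rather than with any new idea.
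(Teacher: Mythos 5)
Your overall architecture coincides with the paper's: the same four regimes $\{H_f<0,\ f^{(2,0)}f^{(0,2)}\neq 0\}$, $\Int\{H_f=0\}$, $\Int\{f^{(2,0)}=0\}$, $\Int\{f^{(0,2)}=0\}$, the density of their union, and (implicitly, via the paper's closing remark) the reformulation of \eqref{eq:05} as divisibility of the cubic Taylor form by the Hessian form on $\{f^{(2,0)}f^{(0,2)}\neq0\}$. Your treatment of cases (b)--(d) is essentially the paper's, except that the paper disposes of $\Int\{H_f=0\}$ in one stroke through the identity $p_1=-4f^{(1,2)}H_f+f^{(0,2)}\partial_xH_f+2f^{(1,1)}\partial_yH_f$ (and its mirror), which avoids your case split; your aside that the graph is \emph{planar} where all second derivatives vanish is false pointwise, but harmless, since every monomial of $p_1,p_2$ contains a second-order factor. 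The genuine divergence is case (a): the paper never constructs the rulings. On $\Omega_{\mathrm a}$ the function is generic, so \thref{lem:02} (generalized Wronskians together with the smoothing \thref{lem:03}) places the graph inside a quadric of negative curvature, necessarily a hyperbolic paraboloid or a one-sheeted hyperboloid, hence doubly ruled; the converse again reduces to \thref{lem:02} because those are the only doubly-ruled surfaces of negative curvature. Your characteristic-field argument is an attractive, more geometric alternative --- in the smooth category.

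The step you flag yourself is, however, a genuine gap rather than a technicality, and your proposed fix misidentifies the paper's machinery. At $W^{3,1}_{\mathrm{loc}}$ regularity the slope fields $m_\pm$ are merely Sobolev, the relation $m_x+mm_y=0$ holds only almost everywhere, and integrating such a field into straight characteristics along which $f$ is affine --- so that the graph really contains two line segments through each point --- is exactly the hard part; \enquote{mollification together with a coarea/Lagrangian argument} is not worked out here, and it is not \enquote{the same machinery} as \thref{thm:01}: the paper's regularity input is the Cauchy--Riemann/elliptic smoothing \thref{lem:03}, which applies verbatim on $\Omega_{\mathrm a}$ (genericity only requires $f^{(0,2)},f^{(2,0)},H_f$ to be nonvanishing, and the lemma carries the sign of $H_f$ through $|H_f|^{3/4}$). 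Once it yields $f\in C^\infty$ on $\Omega_{\mathrm a}$, your ODE argument becomes legitimate; so the clean repair is either to invoke \thref{lem:03} before integrating characteristics, or to bypass the construction altogether and quote \thref{lem:02}, as the paper does. Two smaller soft spots: that a graph swept by two families of \emph{segments} is \enquote{contained in a doubly-ruled surface} is asserted rather than argued, and your passage from \eqref{eq:05} on the dense open union to a weak solution on all of $\Omega$ by \enquote{bookkeeping} is delicate, since the complement of a dense open set may have positive measure --- though on this last point the paper is no more explicit than you are.
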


Note that all of the above are particular examples of ruled surfaces\footnote{A surface that can be swept out by moving a line in space. The straight lines themselves are called \emph{rulings}. The Gaussian curvature on a ruled regular surface is everywhere non-positive.}. Regrettably, the exact classification of solutions to \eqref{eq:05} seems to be a tedious, technical task and therefore will not be given here, so as not to overshadow the main idea.\\

To perform lengthy computations, we will employ a widely used technical computing system \textsf{Wolfram Mathematica} \cite{Mathematica}. Nevertheless, they still could have been done with pen and paper (albeit with some difficulty), and hence the proof remains human-surveyable. A thorough discussion of this aspect can be found in Appendix \ref{app:00}. For transparency, all the results obtained with the help of a computer are tagged with \enquote*{Spikey} (\spikey), followed by a reference to the relevant section of Appendix \ref{app:00}.

%%%%%%%%%%%%%%%%%%%%%%%%%%%%%%%%%%%%%%%%%%%%%%%%%%%%%%%%%%%%%%%%%%%%%%
\section{Notation and basic concepts}
%%%%%%%%%%%%%%%%%%%%%%%%%%%%%%%%%%%%%%%%%%%%%%%%%%%%%%%%%%%%%%%%%%%%%%

To prove the \thref{thm:01}, we will need some very general facts concerning quadratic surfaces, which are in themselves quite interesting. We begin with rephrasing the problem in the language of commutative algebra.

\begin{definition}
Let
$$R\colonequals\mathbb R\left[x,y,\partial^{(0,0)},\partial^{(0,1)},\partial^{(1,0)},\ldots\right]$$
be a ring of polynomials in variables $x$, $y$ and formal partial derivatives $\partial^{(i,j)}$ and let
$$S\colonequals\left\langle\partial^{(0,2)}, \partial^{(2,0)}, \partial^{(0,2)} \partial^{(2,0)}-\partial^{(1,1) 2}\right\rangle$$
be a submonoid of the multiplicative monoid of $R$, with the listed generators. By $S^{-1}R$ we denote a localisation of $R$ at $S$ \cite[s.~2.1]{eisenbud1995commutative}.
\end{definition}

The ring $S^{-1}R$ can be viewed as an algebra of a certain type of differential operators $T$ defined for those smooth functions $f:\mathbb R^2\supseteq\Omega\to\mathbb R$ for which all the expressions
\begin{equation}\label{eq:12}f^{(0,2)}(x,y),\quad f^{(2,0)}(x,y),\quad f^{(0,2)}(x,y) f^{(2,0)}(x,y)- f^{(1,1)}(x,y)^2\end{equation}
do not take a zero value on $\Omega$ and thus have reciprocals. We will call such functions \emph{generic}. Examples include but are not limited to functions with positive Hessian determinant, i.e., whose graphs have positive Gaussian curvature.

\begin{notation}
Let $\Omega\subseteq\mathbb R^2$ be a connected open subset of $\mathbb R^2$. Denote by $\mathcal G(\Omega)$ the set of generic functions $f:\Omega\to\mathbb R$ and by $\mathcal Q(\Omega)$ its subset consisting of parametrizations of quadratic surfaces.
\end{notation}

\begin{definition}
Let $D_x,D_y:S^{-1}R\to S^{-1}R$ be \emph{derivations} \cite[ch.~16]{eisenbud1995commutative}, i.e., $\mathbb R$-linear endomorphisms of additive group of $S^{-1}R$ satisfying the Leibniz product rule
$$D(r_1r_2)=D(r_1)r_2+r_1D(r_2),\quad r_1,r_2\in S^{-1}R,$$
and thus uniquely determined by their values on indeterminates:
\begin{gather*}
D_x(x)\colonequals 1,\quad D_x(y)\colonequals 0,\quad D_x(\partial^{(i,j)})\colonequals\partial^{(i+1,j)},\\
D_y(x)\colonequals 0,\quad D_y(y)\colonequals 1,\quad D_y(\partial^{(i,j)})\colonequals\partial^{(i,j+1)}.
\end{gather*}
In particular, the well-known formula for differentiating fractions
$$D\left(\frac{r}{s}\right)=\frac{D(r)s-rD(s)}{s^2}$$
follows from the Leibniz product rule. A ring $S^{-1}R$ equipped with derivations $D_x,D_y$ forms a \emph{differential ring}.
\end{definition}

\begin{definition}
A \emph{differential ideal} $\mathfrak a$ in a differential ring $R$ is an ideal that is mapped to itself by each derivation.
\end{definition}

\begin{definition}
Let $X$ be a subset of $\mathcal G(\Omega)$. The \emph{annihilator} of $X$ in $S^{-1}R$, denoted by $X^\dagger$, is a collection of differential operators $T\in S^{-1}R$ such that $Tf=0$ for all $f\in X$. The annihilator of any subset is clearly a differential ideal. The annihilator of an empty set is the whole $S^{-1}R$ and the annihilator of the whole $\mathcal G(\Omega)$ is just the zero operator.
\end{definition}

%%%%%%%%%%%%%%%%%%%%%%%%%%%%%%%%%%%%%%%%%%%%%%%%%%%%%%%%%%%%%%%%%%%%%%
\section{Polynomial PDEs satisfied by generic quadratic surfaces}
%%%%%%%%%%%%%%%%%%%%%%%%%%%%%%%%%%%%%%%%%%%%%%%%%%%%%%%%%%%%%%%%%%%%%%

Observe that a graph of a function $f$ is contained in a quadratic surface if and only if its every point satisfies a quadratic equation
\begin{equation}\label{eq:38}a_{11} x^2+a_{12} x y+a_{13} x f+a_{22} y^2+a_{23} y f+a_{33} f^2+b_1 x+b_2 y+b_3 f+c = 0\end{equation}
with constant coefficients $a_{ij},b_k,c$. This is equivalent to the fact that the set of functions
\begin{equation}\label{eq:01}\begin{Bmatrix} x^2, & x y, & x f(x,y), & y^2, & y f(x,y), & f(x,y)^2, & x, & y, & f(x,y), & 1 \end{Bmatrix}\end{equation}
is linearly dependent. That is how the concept of a generalized Wronskian for functions of several variables enters play. For clarity, we adopt the notation from \cite{WOLSSON198973}.

\begin{definition}[{\cite[Definition~1]{WOLSSON198973}}]
A \emph{generalised Wronskian} of $\boldsymbol\phi=(\phi_1(t),\ldots,\phi_n(t))$ is any determinant of the type
$$\begin{vmatrix} \boldsymbol\phi \\ \partial^1\boldsymbol\phi \\ \vdots \\ \partial^{n-1}\boldsymbol\phi \end{vmatrix},$$
where $\boldsymbol\phi$, $\partial^i\boldsymbol\phi$ are row vectors, $\partial^i$ is any partial derivative of order not greater that $i$ and all $\partial^i$ are distinct.
\end{definition}

\begin{remark}
Note that in the realm of functions in $m\geq 2$ variables, a generalized Wronskian of $\varphi$ is no longer unique, since there are many possible ways of choosing row vectors $\partial^i\boldsymbol\phi$ satisfying all the imposed conditions. More precisely, there are
$$\binom{m+i}{m}$$
partial derivatives of order not greater than $i$, and hence there are exactly
$$\prod_{i=0}^{n-1}\left(\binom{m+i}{m}-i\right)$$
generalised Wronskians of $n$ functions in $m$ variables. However, from now on, we will identify all generalized Wronskians that differ only in the order of rows, as it does not affect the rank of the matrix.
\end{remark}

\begin{notation}
Denote by $\boldsymbol\phi$ the tuple of functions \eqref{eq:01}.
\end{notation}

\begin{assertion}\thlabel{ass:01}
Each generalized Wronskian of $\boldsymbol\phi$ can be viewed as an element of $S^{-1}R$. Moreover, by the very definition, it belongs to $\mathcal Q(\Omega)^\dagger$. Indeed, if the set of functions \eqref{eq:01} is linearly dependent, then all its generalized Wronskians vanish identically since their columns are themselves linearly dependent.
\end{assertion}

The following key proposition characterizes the set of polynomial differential equations satisfied by the parametrization of any generic quadratic surface.

\begin{proposition}\thlabel{pro:01}
Let $\Omega\subseteq\mathbb R^2$ be a connected open subset of $\mathbb R^2$. Then the annihilator $\mathcal Q(\Omega)^\dagger\subseteq S^{-1}R$ is a differential ideal generated by
\begin{equation}\label{eq:09}
\begin{aligned}
&\partial^{(3,0)}\partial^{(0,2) 2}-3 \partial^{(1,2)} \partial^{(2,0)} \partial^{(0,2)}+2 \partial^{(0,3)} \partial^{(1,1)} \partial^{(2,0)}, \\
&\partial^{(0,3)}\partial^{(2,0) 2}-3 \partial^{(2,1)} \partial^{(0,2)} \partial^{(2,0)}+2 \partial^{(3,0)} \partial^{(1,1)} \partial^{(0,2)}.
\end{aligned}
\end{equation}
\end{proposition}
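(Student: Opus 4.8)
The plan is to prove the two inclusions separately. For $\supseteq$, I would observe that both generators in \eqref{eq:09} are (up to sign and obvious row operations) generalized Wronskians of the tuple $\boldsymbol\phi$: each is obtained by differentiating the ten functions in \eqref{eq:01} with a suitable selection of nine distinct partial derivatives of controlled order, after localizing so that $\partial^{(0,2)}$, $\partial^{(2,0)}$ and the Hessian factor become invertible (they appear as the pivots clearing the second-degree monomials $x^2,xy,y^2$ and the product $xf,yf,f^2$). By \thref{ass:01} every such Wronskian lies in $\mathcal Q(\Omega)^\dagger$, and since $\mathcal Q(\Omega)^\dagger$ is a differential ideal, the differential ideal generated by \eqref{eq:09} is contained in it. This direction is essentially a bookkeeping computation identifying the right generalized Wronskians and is where the Spikey-marked symbolic verification enters.

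For the reverse inclusion $\subseteq$ — the substantive half — I would argue that any $T\in\mathcal Q(\Omega)^\dagger$ is forced to lie in the ideal generated by \eqref{eq:09}. The strategy is to produce, for a \emph{generic} quadratic surface, an explicit parametrization by an honest function $f$ and to feed its formal derivatives $\partial^{(i,j)}\mapsto f^{(i,j)}$ into $T$. Solving \eqref{eq:38} for $f$ in terms of $x,y$ and the coefficients $a_{ij},b_k,c$, computing all partial derivatives up to the order occurring in $T$, and substituting shows that $Tf=0$ as a rational identity in $x,y$ and the parameters. A dimension/genericity count on the family of quadratic surfaces then shows that this imposes enough independent conditions on $T$ to conclude that $T$, reduced modulo \eqref{eq:09} using the derivation structure, must vanish. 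Concretely I would localize and pass to a Gröbner-basis argument: show that \eqref{eq:09} together with their $D_x,D_y$-derivatives generate an ideal whose associated variety (inside $\operatorname{Spec}S^{-1}R$) is exactly the Zariski closure of the image of the parametrizations of generic quadratic surfaces, and that this ideal is already radical and differentially closed, so that $\mathcal Q(\Omega)^\dagger$, being the vanishing ideal of that image, coincides with it.

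The main obstacle is the reverse inclusion, and within it two points deserve care. First, one must ensure the genericity count is tight: the space of quadratic equations \eqref{eq:38} is $10$-dimensional projectively $9$-dimensional, while fixing a point $(x_0,y_0)$ and the jet of $f$ up to order $3$ there gives $1+2+3+4=10$ jet coordinates, so the parametrized family sweeps out essentially everything and the only universal relations among the jets are the differential consequences of \eqref{eq:09}; making this precise requires checking that the evaluation map from (surface, basepoint) to $3$-jets is dominant with the expected fiber dimension. Second, one must verify that the ideal $\langle$\eqref{eq:09}$\rangle$ is genuinely differentially closed — i.e. that $D_x$ and $D_y$ applied to either generator land back in the ideal — since otherwise its differential closure could be strictly larger and the two inclusions would not meet. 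This is a finite symbolic check: express $D_x$ and $D_y$ of each cubic generator as $S^{-1}R$-combinations of the two generators, which the reduced Gröbner basis computation (again Spikey-marked) is designed to certify. Everything else — the explicit parametrization, the derivative computations, the substitution — is routine once the localization at $S$ is set up so that the relevant denominators are units.
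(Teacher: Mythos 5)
Your easy inclusion matches the paper: up to the unit factor $72\,{\partial^{(0,2)}}^2$ the two polynomials \eqref{eq:09} are exactly two of the four nontrivial order-$10$ generalized Wronskians of $\boldsymbol\phi$, so \thref{ass:01} places them (after dividing by that unit of $S^{-1}R$) in $\mathcal Q(\Omega)^\dagger$. The hard inclusion, however, has genuine gaps. Your closing step --- \enquote{the ideal is radical, its variety is the Zariski closure of the image of quadric jets, hence it coincides with the vanishing ideal $\mathcal Q(\Omega)^\dagger$} --- silently invokes a Nullstellensatz, and over $\mathbb R$, where everything here lives, a radical ideal is not recovered from its real zero set (one would need the real radical); moreover you never prove radicality, which is not an obvious property of this differential ideal. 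The paper needs none of this: from $p_1,p_2$ and their derivatives it manufactures a family $G\subseteq\mathfrak a$ whose leading terms are precisely all derivative variables outside the eleven selected ones $x,y,\partial^{(0,0)},\partial^{(0,1)},\partial^{(1,0)},\partial^{(0,2)},\partial^{(1,1)},\partial^{(2,0)},\partial^{(0,3)},\partial^{(1,2)},\partial^{(0,4)}$, using that the matrices $\mathbf A_3,\mathbf A_4,\mathbf A_5$ are invertible over $S^{-1}R$ (this is exactly why $S$ was chosen). Any $p\in\mathcal Q(\Omega)^\dagger$ then reduces modulo $G$ to a rational function $r$ in those eleven variables, and one only needs the elementary fact that a rational function vanishing on a nonempty open subset of $\mathbb R^{11}$ is identically zero; the open set comes from checking that the map $\delta$ from (coefficients of the quadric with $a_{33}=1$, base point) to these eleven jet coordinates has nonvanishing Jacobian at an explicit point, e.g.\ for $f=\sqrt{1+x^2+y^2}$.

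Two further points in your sketch are off. The dimension count is wrong as stated: the family (quadric, base point) is $9+2=11$-dimensional, while the full $3$-jet space with base point is $12$-dimensional and the image satisfies the two relations $p_1=p_2=0$, so the evaluation map to $3$-jets is \emph{not} dominant; what is true, and what the argument needs, is that the map to the carefully selected eleven coordinates above (which include $\partial^{(0,4)}$ and exclude $\partial^{(2,1)},\partial^{(3,0)}$) is a local diffeomorphism, and that selection is dictated by the normal form with respect to $G$, not by a generic count. Finally, your proposed \enquote{finite symbolic check} that $D_xp_k$ and $D_yp_k$ are $S^{-1}R$-combinations of $p_1,p_2$ is both unnecessary and false: the proposition concerns the \emph{differential} ideal generated by \eqref{eq:09}, which is closed under $D_x,D_y$ by definition, whereas the algebraic ideal $\langle p_1,p_2\rangle$ is not differentially closed --- for instance $D_xp_1$ is linear in fourth-order variables, so it is nonzero at points where $p_1=p_2=0$ and the fourth-order variables are chosen freely, hence it cannot be an $S^{-1}R$-combination of $p_1$ and $p_2$.
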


\begin{proof}
Clearly $\mathcal Q(\Omega)^\dagger$ is a differential ideal in $S^{-1}R$. Denote by $\mathfrak a$ the differential ideal generated by \eqref{eq:09}. We have to show that $\mathcal Q(\Omega)^\dagger=\mathfrak a$. We will do this by proving both inclusions.\\

First, we will show a simpler inclusion $\mathcal Q(\Omega)^\dagger\supseteq\mathfrak a$. Since both $\mathcal Q(\Omega)^\dagger$ and $\mathfrak a$ are differential ideals, it is enough to prove that the generators of $\mathfrak a$ are contained in $\mathcal Q(\Omega)^\dagger$. Let $f\in\mathcal Q(\Omega)$ be a parametrization of some generic quadratic surface. By \thref{ass:01}, all the generalized Wronskians of $\boldsymbol\phi$ vanish identically on $\Omega$. Denote by $W_{i,j}$ the generalised Wronskian of $\boldsymbol\phi$ formed by deleting the row $\boldsymbol\phi^{(i,j)}$ from
$$\begin{pmatrix} \boldsymbol\phi \\ \boldsymbol\phi^{(0,1)} \\ \boldsymbol\phi^{(1,0)} \\ \boldsymbol\phi^{(0,2)} \\ \boldsymbol\phi^{(1,1)} \\ \boldsymbol\phi^{(2,0)} \\ \boldsymbol\phi^{(0,3)} \\ \boldsymbol\phi^{(1,2)} \\ \boldsymbol\phi^{(2,1)} \\ \boldsymbol\phi^{(3,0)} \\ \boldsymbol\phi^{(0,4)} \end{pmatrix}.$$
The only non-trivial (i.e., not vanishing algebraically) ones are the following:
\begin{align*}
W_{3,0} &= 24 f^{(0,2) 2} \Big(3 f^{(2,1)} f^{(0,2) 2}-6 f^{(1,1)} f^{(1,2)} f^{(0,2)}- f^{(0,3)} f^{(2,0)} f^{(0,2)}+4 f^{(0,3)} f^{(1,1) 2}\Big) \\
W_{2,1} &= 72 f^{(0,2) 2} \Big(f^{(3,0)} f^{(0,2) 2}-3 f^{(1,2)} f^{(2,0)} f^{(0,2)}+2 f^{(0,3)} f^{(1,1)} f^{(2,0)}\Big) \\
W_{1,2} &= 72 f^{(0,2) 2} \Big(f^{(0,3)} f^{(2,0) 2}-3 f^{(0,2)} f^{(2,1)} f^{(2,0)}+2 f^{(0,2)} f^{(1,1)} f^{(3,0)}\Big) \\
W_{0,3} &= 24 f^{(0,2) 2} \Big(4 f^{(3,0)} f^{(1,1) 2}-6 f^{(2,0)} f^{(2,1)} f^{(1,1)}+3 f^{(1,2)} f^{(2,0) 2}-f^{(0,2)} f^{(2,0)} f^{(3,0)}\Big)
\end{align*}
Note that although the underlying matrices depend on fourth-order partial derivatives, their determinants do not, which is somehow intriguing.

\begin{remark}
Since $\boldsymbol\phi$ consists of $n=10$ functions and in $m=2$ variables, there are exactly $10$ partial derivatives of order at most $3$, there is a unique generalized Wronskian of $\boldsymbol\phi$ using partial derivatives of order at most $3$, namely $W_{0,4}$. However, it turns out that $\boldsymbol\phi^{(3,0)},\boldsymbol\phi^{(2,1)},\boldsymbol\phi^{(1,2)},\boldsymbol\phi^{(0,3)}$ are always linearly dependent. Indeed, observe that the $4\times 10$ matrix
\begin{equation*}\WM{app:01}\begin{pmatrix} \boldsymbol\phi^{(3,0)} \\ \boldsymbol\phi^{(2,1)} \\ \boldsymbol\phi^{(1,2)} \\ \boldsymbol\phi^{(0,3)} \end{pmatrix}\end{equation*}
has only $4$ non-zero columns corresponding to $xf(x,y)$, $yf(x,y)$, $f(x,y)^2$, $f(x,y)$, and a direct computation shows that the determinant of this only non-trivial $4\times 4$ minor is zero anyway. Thus, every generalized Wronskian of $\boldsymbol\phi$ vanishes identically unless it is missing some third-order partial derivative. In particular, there is no non-trivial generalized Wronskian of $\boldsymbol\phi$ using partial derivatives of order at most $3$. Moreover, there are (\emph{a priori} at most) only $4$ non-trivial generalized Wronskians of $\boldsymbol\phi$ using a single partial derivative of order greater than $3$, since it must replace one of the $4$ partial derivatives of order $3$.
\end{remark}

Now, since $f$ is assumed to be generic, its second-order pure derivative $f^{(0,2)}$ is non-zero. Hence, from the vanishing of $W_{2,1}$ and $W_{1,2}$, we obtain that a parametrization of any generic quadratic surface satisfies \eqref{eq:05}. This concludes the first part of the proof.

\begin{remark}
Note that for any generic function $f$, if $W_{2,1}$ and $W_{1,2}$ vanish, then the remaining two generalized Wronskians also vanish. Indeed, we have
\begin{equation}\label{eq:06}
\begin{gathered}
3 f^{(2,0)} W_{3,0} = 2 f^{(1,1)} W_{2,1}-f^{(0,2)} W_{1,2}, \\
3 f^{(0,2)} W_{0,3} = 2 f^{(1,1)} W_{1,2}-f^{(2,0)} W_{2,1},
\end{gathered}
\end{equation}
while both $f^{(2,0)}$ and $f^{(0,2)}$ are non-zero. Furthermore, the same holds for any pair of featured generalized Wronskians except for $W_{3,0}$ and $W_{0,3}$, when the above equations \eqref{eq:06} in variables $W_{2,1}$ and $W_{1,2}$ may turn out to be linearly dependent. This is the case exactly when
$$f^{(0,2)}f^{(2,0)}-4f^{(1,1) 2}=0,$$
which together with $W_{3,0}=0$ and $W_{0,3}=0$ forms a system of partial differential equations. This time, however, apart from parametrizations of certain quadratic surfaces (including degenerate ones), it admits a single family of exotic solutions of the form
$$f(x,y)=\frac{a}{(x+x_0)(y+y_0)}+b_1(x+x_0)+b_2(y+y_0)+c,$$
which arise as parametrizations of certain cubic surfaces. Moreover, note that all these functions are generic, unless $a=0$. Therefore, the choice of equations \eqref{eq:05} was arbitrary only to some extent.
\end{remark}

\begin{remark}
Observe that the last factors of $W_{3,0}$ and $W_{0,3}$ as well as $W_{2,1}$ and $W_{1,2}$ are equivalent up to the order of variables. However, the overall symmetry is broken by the common factor $f^{(0,2) 2}$, which is the result of choosing $\boldsymbol\phi^{(0,4)}$ as a supplementary row. Exactly as we should expect, if we had chosen $\boldsymbol\phi^{(4,0)}$, we would have obtained the same set of generalized Wronskians, but this time with common factor $f^{(2,0) 2}$ instead of $f^{(0,2) 2}$.
\end{remark}

Let
$$Q\colonequals S^{-1}\mathbb R\left[x,y,\partial^{(0,0)},\partial^{(0,1)},\partial^{(1,0)},\partial^{(0,2)},\partial^{(1,1)},\partial^{(2,0)},\partial^{(0,3)},\partial^{(1,2)},\partial^{(0,4)}\right]$$
be the localization of a real polynomial ring in selected $11$ variables at $S$. Since localization commutes with adding new external elements, there is a ring isomorphism
$$S^{-1}R\simeq Q\left[\partial^{(2,1)},\partial^{(3,0)},\partial^{(1,3)},\ldots\right],$$
where the latter is already a polynomial ring over $Q$ in the remaining infinitely many variables. Let us choose a graded lexicographic order on variables $\partial^{(i,j)}$ and then a graded reverse lexicographic order on monomials. We will find a Gr\"obner basis of $\mathfrak a$ with respect to this monomial ordering. For more details on Gr\"obner bases, including definitions and examples, we recommend the reader to go through \cite[ch.~2]{cox2016ideals}.\\

Denote polynomials \eqref{eq:09} respectively by $p_1$, $p_2$. Observe that for every $i,j\geq 0$ and $k=1,2$, ${D_x}^i{D_y}^jp_k$ is linear in the highest-order partial derivatives and thus we can write, e.g.,
\begin{equation}\label{eq:10}\begin{pmatrix}D_xD_xp_1\\D_xD_yp_1\\D_yD_yp_1\\D_xD_xp_2\\D_xD_yp_2\\D_yD_yp_2 \end{pmatrix}=\boldsymbol A_5\begin{pmatrix}\partial^{(0,5)}\\\partial^{(1,4)}\\\partial^{(2,3)}\\\partial^{(3,2)}\\\partial^{(4,1)}\\\partial^{(5,0)}\end{pmatrix}+\boldsymbol b_5,\end{equation}
where
$$\small\boldsymbol A_5\colonequals\begin{pmatrix}
 0 & 0 & 2 \partial^{(1,1)} \partial^{(2,0)} & -3 \partial^{(0,2)} \partial^{(2,0)} & 0 & \partial^{(0,2) 2} \\
 0 & 2 \partial^{(1,1)} \partial^{(2,0)} & -3 \partial^{(0,2)} \partial^{(2,0)} & 0 & \partial^{(0,2) 2} & 0 \\
 2 \partial^{(1,1)} \partial^{(2,0)} & -3 \partial^{(0,2)} \partial^{(2,0)} & 0 & \partial^{(0,2) 2} & 0 & 0 \\
 0 & 0 & \partial^{(2,0) 2} & 0 & -3 \partial^{(0,2)} \partial^{(2,0)} & 2 \partial^{(0,2)} \partial^{(1,1)} \\
 0 & \partial^{(2,0) 2} & 0 & -3 \partial^{(0,2)} \partial^{(2,0)} & 2 \partial^{(0,2)} \partial^{(1,1)} & 0 \\
 \partial^{(2,0) 2} & 0 & -3 \partial^{(0,2)} \partial^{(2,0)} & 2 \partial^{(0,2)} \partial^{(1,1)} & 0 & 0 \\
\end{pmatrix}$$
is a $6\times 6$ matrix and $\boldsymbol b_5$ (the definition of which is irrelevant and therefore has been omitted for brevity) is a $6\times 1$ vector over $S^{-1}R$. Moreover, $\boldsymbol A_5$ and $\boldsymbol b_5$ contain only partial derivatives of order at most $4$. One can easily verify that the determinant of $\boldsymbol A_5$ is equal to
\begin{equation*}\WM{app:02}-64\partial^{(0,2) 3}\partial^{(2,0) 3} \left(\partial^{(0,2)} \partial^{(2,0)}-\partial^{(1,1) 2}\right)^3\end{equation*}
and thus is a unit in $S^{-1}R$. It follows that $\boldsymbol A_5$ is invertible over $S^{-1}R$ and we can rewrite \eqref{eq:10} as
\begin{equation}\label{eq:13}{\boldsymbol A_5}^{-1}\begin{pmatrix}D_xD_xp_1\\D_xD_yp_1\\D_yD_yp_1\\D_xD_xp_2\\D_xD_yp_2\\D_yD_yp_2 \end{pmatrix}=\begin{pmatrix}\partial^{(0,5)}\\\partial^{(1,4)}\\\partial^{(2,3)}\\\partial^{(3,2)}\\\partial^{(4,1)}\\\partial^{(5,0)}\end{pmatrix}+{\boldsymbol A_5}^{-1}\boldsymbol b_5.\end{equation}
Now, the left-hand side is a vector of elements from $\mathfrak a$ and hence so is also the right-hand side. Moreover, since ${\boldsymbol A_5}^{-1}\boldsymbol b_5$ contains only partial derivatives of order at most $4$, the leading term of each polynomial on the right-hand side is a corresponding fifth-order partial derivative. By definition, the ideal $\mathfrak a$ is closed under derivations and thus by differentiating these polynomials, we can obtain an element of $\mathfrak a$ with the leading term being any partial derivative of higher order. Using the same argument, we can likewise write
$${\boldsymbol A_4}^{-1}\begin{pmatrix}D_xp_1\\D_yp_1\\D_xp_2\\D_yp_2\end{pmatrix}=\begin{pmatrix}\partial^{(1,3)}\\\partial^{(2,2)}\\\partial^{(3,1)}\\\partial^{(4,0)}\end{pmatrix}+{\boldsymbol A_4}^{-1}\boldsymbol b_4,\quad{\boldsymbol A_3}^{-1}\begin{pmatrix}p_1\\p_2\end{pmatrix}=\begin{pmatrix}\partial^{(2,1)}\\\partial^{(3,0)}\end{pmatrix}+{\boldsymbol A_3}^{-1}\boldsymbol b_3,$$
since the determinant of
$$\small\boldsymbol A_4\colonequals\begin{pmatrix}
 2 \partial^{(1,1)} \partial^{(2,0)} & -3 \partial^{(0,2)} \partial^{(2,0)} & 0 & \partial^{(0,2) 2} \\
 -3 \partial^{(0,2)} \partial^{(2,0)} & 0 & \partial^{(0,2) 2} & 0 \\
 \partial^{(2,0) 2} & 0 & -3 \partial^{(0,2)} \partial^{(2,0)} & 2 \partial^{(0,2)} \partial^{(1,1)} \\
 0 & -3 \partial^{(0,2)} \partial^{(2,0)} & 2 \partial^{(0,2)} \partial^{(1,1)} & 0 \\
\end{pmatrix}$$
is equal to
\begin{equation*}\WM{app:02}-24\partial^{(0,2) 4}\partial^{(2,0) 2} \left(\partial^{(0,2)} \partial^{(2,0)}-\partial^{(1,1) 2}\right)\end{equation*}
and the determinant of
$$\small\boldsymbol A_3\colonequals\begin{pmatrix}
 0 & \partial^{(0,2) 2} \\
 -3 \partial^{(0,2)} \partial^{(2,0)} & 2 \partial^{(0,2)} \partial^{(1,1)} \\
\end{pmatrix}$$
is equal to
\begin{equation*}\WM{app:02}3\partial^{(0,2) 3} \partial^{(2,0)}.\end{equation*}
Hence all the partial derivatives $\partial^{(2,1)},\partial^{(3,0)},\partial^{(1,3)},\ldots$, which are exactly those not included in the definition of $Q$, are contained in the ideal of leading terms $\langle\mathrm{LT}(\mathfrak a)\rangle$ \cite[Definition~2.5.1]{cox2016ideals}.

\begin{remark}
It is a mere coincidence that after computing second-order partial derivatives of $p_1$ and $p_2$, the number of independent equations is equal to the number of fifth-order partial derivatives of $f$, and thus the matrix $\boldsymbol A_5$ is uniquely determined. The multiplicative monoid $S\subseteq R$ was devised to contain all the prime factors of $\det\boldsymbol A_5$.  However, to obtain $\boldsymbol A_4$ and $\boldsymbol A_3$, we had to arbitrarily choose some subset of variables, and this time it was not a coincidence that both $\det\boldsymbol A_4$ and $\det\boldsymbol A_3$ share the same prime factors as $\det\boldsymbol A_5$. Indeed, there are other choices for which it is no longer the case. Thus, the set of variables to the polynomial ring $Q$ was carefully selected so that both $\boldsymbol A_4$ and $\boldsymbol A_5$ are already invertible in $S^{-1}R$.
\end{remark}

Denote by $G$ the set of polynomials constructed above, such that every monomial $\partial^{(2,1)},\partial^{(3,0)},\partial^{(1,3)},\ldots$ is a leading term $\mathrm{LT}(g)$ of some polynomial $g\in G$. Suppose that $\mathcal Q(\Omega)^\dagger\supsetneq\langle G\rangle$ and let $p\in\mathcal Q(\Omega)^\dagger\setminus\langle G\rangle$. After a complete reduction of $p$ by $G$ we obtain a remainder $r\in\mathcal Q(\Omega)^\dagger\setminus\langle G\rangle$, which is irreducible by $G$, i.e., its leading term $\mathrm{LT}(r)$ is not a multiple of any $\mathrm{LT}(g)$, $g\in G$ \cite[Theorem~2.3.3]{cox2016ideals}. Thus $r$ is an element of the coefficient ring $Q$ and corresponds to some rational function in selected $11$ variables. We will prove that $r=0$, which eventually will give us the desired contradiction. By definition, it vanishes for any tuple consisting of $x$, $y$, and relevant partial derivatives of some function parametrizing a generic quadratic surface at $(x,y)$. Since $r$ is rational, it is enough to show that the set of such arguments has a non-empty interior as a subset of $\mathbb R^{11}$.\\

For this, we define an implicit function $\boldsymbol\psi:\mathbb R^{11}\to\mathbb R^{11}$ in the following way. Let $f$ be a parametrization of some quadratic surface satisfying \eqref{eq:38} with $a_{33}=1$. Then $\boldsymbol\psi$ maps the tuple of parameters
\begin{equation}\label{eq:03}\begin{pmatrix} x, & y, & a_{11}, & a_{12}, & a_{13}, & a_{22}, & a_{23}, & b_1, & b_2, & b_3, & c \end{pmatrix}\end{equation}
to the tuple
$$\begin{pmatrix} x, & y, & f, & f^{(0,1)}, & f^{(1,0)}, & f^{(0,2)}, & f^{(1,1)}, & f^{(2,0)}, & f^{(0,3)}, & f^{(1,2)}, & f^{(0,4)} \end{pmatrix}$$
consisting of $x$, $y$ and relevant partial derivatives of $f$ at $(x,y)$. We can obtain an explicit formula for $\boldsymbol\psi$ by symbolically solving the quadratic equation \eqref{eq:38} first and then symbolically differentiating the result. Since in general there are two possible solutions for $f$, we have to locally select an arbitrary branch of the square root function, so that $\boldsymbol\psi$ is smooth.\\

Now, consider a generic function
$$f(x,y)=\sqrt{1+x^2+y^2}$$
parametrizing a quadratic surface represented by the tuple of parameters
\begin{equation}\label{eq:11}\begin{pmatrix} x, & y, & -1, & 0, & 0, & -1, & 0, & 0, & 0, & 0, & -1 \end{pmatrix}.\end{equation}
Since \eqref{eq:12} depends continuously on \eqref{eq:03}, any point in some open neighborhood $U$ of \eqref{eq:11} also corresponds to a parametrization of some generic quadratic surface and thus $r\circ\boldsymbol\psi$ vanishes on $U$. Hence, it is enough to show that $\boldsymbol\psi(U)$ has a non-empty interior. Computing the Jacobian determinant of $\boldsymbol\psi$ at \eqref{eq:11} yields
\begin{equation*}\WM{app:03}\frac{9 \left(x^2+1\right)^4}{128 \left(x^2+y^2+1\right)^{11}},\end{equation*}
which is non-zero. Hence $\boldsymbol\psi$ is a local diffeomorphism, and so there exists an open subset $V\subseteq U$ such that $\boldsymbol\psi\vert_V:V\to\boldsymbol\psi(V)$ is a diffeomorphism. In particular, $\boldsymbol\psi(V)$ is open. However, recall that it is contained in the zero set of $r$, which must therefore be a zero function, a contradiction. It follows that $\mathfrak a\subseteq\mathcal Q(\Omega)^\dagger=\langle G\rangle\subseteq\mathfrak a$, which means that $\mathcal Q(\Omega)^\dagger=\mathfrak a$ and moreover $G$ is, in fact, a reduced Gr\"obner basis of $\mathfrak a$, which concludes the proof.
\end{proof}

\begin{remark}
Now, we are able to clarify in what sense equations \eqref{eq:05} are minimal. Namely, \eqref{eq:09} form a reduced Gr\"obner basis of $\mathcal Q(\Omega)^\dagger$, while, as it will turn out, we would obtain the same results as in \thref{thm:01} and \thref{thm:02} for any generating set of $\mathcal Q(\Omega)^\dagger$. Although the elements \eqref{eq:09} seem to be the best choice, the reduced Gr\"obner basis is by no means unique. Besides, with \thref{pro:01} at hand, finding other generating sets becomes a purely algorithmic task.
\end{remark}

%%%%%%%%%%%%%%%%%%%%%%%%%%%%%%%%%%%%%%%%%%%%%%%%%%%%%%%%%%%%%%%%%%%%%%
\section{Smoothing properties and their connection with holomorphicity}
%%%%%%%%%%%%%%%%%%%%%%%%%%%%%%%%%%%%%%%%%%%%%%%%%%%%%%%%%%%%%%%%%%%%%%

At some point in the future, we will want to deduce a linear dependence of a set of functions from the vanishing of their generalized Wronskians. For this, we will use the main result from \cite{WOLSSON198973}, where the necessary and sufficient conditions are established. Although the author roughly requires that all the generalized Wronskians must vanish, in the course of the inherently constructive proof, he considers only finitely many generalized Wronskians of bounded order. Nevertheless, our initial assumption that the function $f$ is merely an element of $W^{3,1}_{\mathrm{loc}}(\Omega)$ is too weak for any non-trivial generalized Wronskian to be well-defined. Therefore, we need to somehow improve the smoothness of $f$. As it turns out, the differentiability of class $C^5$ will be sufficient, and thus we will not use the following fact in its full generality.

\begin{lemma}\thlabel{lem:03}
Let $f\in W^{3,1}_{\mathrm{loc}}(\Omega)$ be a function defined on a connected open subset $\Omega\subseteq\mathbb R^2$. Suppose that $f$ is generic and is a weak solution to the system of partial differential equations \eqref{eq:05}. Then $f$ is infinitely differentiable.
\end{lemma}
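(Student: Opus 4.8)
The plan is a bootstrap argument, whose shape is as follows: as soon as $f$ has sufficient \emph{classical} regularity, the algebraic rigidity of the whole differential ideal generated by \eqref{eq:05} pins down every higher-order partial derivative of $f$ in terms of finitely many lower-order ones, and a soft induction then gives $f\in C^\infty$; the only genuine work lies in extracting a foothold of classical differentiability from the Sobolev hypothesis. The algebraic input is \thref{pro:01} together with its proof. A classical (say $C^5$) solution $f$ of \eqref{eq:05} kills the generators $p_1,p_2$ of $\mathfrak a$; since $\mathfrak a$ is the differential ideal generated by $p_1,p_2$ — hence by all their $D_x,D_y$-derivatives — and since genericity guarantees that every element of $S^{-1}R$ we ever multiply by evaluates to an honest function, $f$ kills all of $\mathfrak a=\mathcal Q(\Omega)^\dagger$. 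Now recall that $\mathfrak a$ has the reduced Gr\"obner basis $G$ exhibited in the proof of \thref{pro:01}, whose leading terms are precisely the monomials $\partial^{(i,j)}$ with $i+j\geq 3$ other than $\partial^{(0,3)},\partial^{(1,2)},\partial^{(0,4)}$; reducing such a monomial modulo $G$ lands in the coefficient ring $Q$. Consequently, for every $(i,j)$ with $i+j\geq 3$ and $(i,j)\notin\{(0,3),(1,2),(0,4)\}$ there is a rational function $R_{i,j}$ in the eleven variables $x,y,f,f^{(1,0)},f^{(0,1)},f^{(2,0)},f^{(1,1)},f^{(0,2)},f^{(1,2)},f^{(0,3)},f^{(0,4)}$, with denominator a monomial in the generators of $S$ (hence non-vanishing for generic $f$), such that $f^{(i,j)}=R_{i,j}$ identically.

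Granting this, let $\mathcal B$ denote the list $f,f^{(1,0)},f^{(0,1)},f^{(2,0)},f^{(1,1)},f^{(0,2)},f^{(1,2)},f^{(0,3)},f^{(0,4)}$. A direct check shows that applying $D_x$ or $D_y$ to any member of $\mathcal B$ yields either another member of $\mathcal B$ or one of the derivatives $f^{(i,j)}=R_{i,j}$ above (for instance $D_xf^{(0,4)}=f^{(1,4)}=R_{1,4}$, $D_yf^{(0,4)}=f^{(0,5)}=R_{0,5}$, $D_yf^{(1,1)}=f^{(1,2)}$, $D_xf^{(1,1)}=f^{(2,1)}=R_{2,1}$, and so on down the list); in particular only the $R_{i,j}$ with $i+j\leq 5$ are ever used. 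Thus, modulo the $R_{i,j}$, the collection $\mathcal B$ is closed under differentiation, and each $R_{i,j}$ is a $C^\infty$ function of its arguments on the open locus where the generators of $S$ do not vanish. The induction then runs: if every member of $\mathcal B$ lies in $C^k_{\mathrm{loc}}(\Omega)$, then all of their first-order partials do too, whence every member of $\mathcal B$ lies in $C^{k+1}_{\mathrm{loc}}(\Omega)$; iterating, $\mathcal B\subseteq C^\infty(\Omega)$ and in particular $f\in C^\infty(\Omega)$.

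What remains, and what I expect to be the main obstacle, is to start the induction: to upgrade the given $f\in W^{3,1}_{\mathrm{loc}}(\Omega)$ to $C^5_{\mathrm{loc}}(\Omega)$ — equivalently, to make all members of $\mathcal B$ continuous — so that $f$ becomes a classical solution of \eqref{eq:05} and the reduction above becomes available verbatim. Two difficulties are intertwined here. First, $W^{3,1}_{\mathrm{loc}}(\mathbb R^2)$ only embeds into $C^1_{\mathrm{loc}}$ (via $W^{2,1}(\mathbb R^2)\hookrightarrow C^0$ applied to $\nabla f$), with $f^{(2,0)},f^{(1,1)},f^{(0,2)}$ a priori only in $W^{1,1}_{\mathrm{loc}}\cap L^2_{\mathrm{loc}}$, with $f^{(0,3)},f^{(1,2)}$ only in $L^1_{\mathrm{loc}}$, and with $f^{(0,4)}$ not even defined; second, at this regularity the cubic products in \eqref{eq:05} are barely integrable, so the notion of \enquote{weak solution} must already carry enough integration by parts to be meaningful. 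I would proceed by mollification, $f_\varepsilon=f\ast\rho_\varepsilon$, exploiting the lowest-order consequences of \eqref{eq:05} — the $\mathbf A_3$-level identities, which express $f^{(3,0)}$ and $f^{(2,1)}$ as rational combinations of $f^{(2,0)},f^{(1,1)},f^{(0,2)}$ and $\nabla f^{(0,2)}$ with denominator a power of $f^{(0,2)}$, and then the $\mathbf A_4$-level ones — to obtain bounds uniform on compacta that improve the integrability of the top derivatives step by step, reaching $W^{3,p}_{\mathrm{loc}}$ for all $p<\infty$ (hence $C^{2,\alpha}_{\mathrm{loc}}$ by Sobolev) and then $C^5_{\mathrm{loc}}$. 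The anisotropic form of \eqref{eq:05} should help organise this, since each of the two equations pins a \emph{pure} third derivative — $f^{(3,0)}$, respectively $f^{(0,3)}$ — in terms of derivatives taking at most two $x$-derivatives, respectively at most two $y$-derivatives, so the gain can be taken one direction at a time. Once $f\in C^5_{\mathrm{loc}}(\Omega)$, the algebraic reduction of the first paragraph applies and the soft induction of the second finishes the proof.
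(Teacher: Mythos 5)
There is a genuine gap, and it sits exactly where you yourself locate ``the main obstacle'': the passage from a weak $W^{3,1}_{\mathrm{loc}}$ solution to classical regularity is the entire content of the lemma, and your proposal leaves it as a plan (``I would proceed by mollification\dots to obtain bounds uniform on compacta'') with no mechanism that could deliver such bounds. The system \eqref{eq:05} is fully nonlinear and cubic in the derivatives, and nothing in your sketch identifies a source of compactness or ellipticity: mollifying $f$ does not commute with the nonlinear equations, the ``$\mathbf A_3$-level'' identities only re-express some third derivatives in terms of others with denominators that you cannot yet control uniformly, and no a priori estimate is exhibited that would push $f^{(3,0)},f^{(0,3)}$ from $L^1_{\mathrm{loc}}$ to higher integrability. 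As stated, the first and second paragraphs (the Gr\"obner-basis reduction and the soft induction, which are essentially fine and consistent with the proof of \thref{pro:01}) never get off the ground, because they require $f\in C^5_{\mathrm{loc}}$, which is precisely what has not been established.

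The paper's proof supplies the missing idea by linearizing the problem: it introduces the pair $u,v$ of \eqref{eq:02}, built only from second derivatives of $f$ and hence lying in $W^{1,1}_{\mathrm{loc}}(\Omega)$ thanks to genericity, and observes that \eqref{eq:05} is equivalent to $u,v$ satisfying the Cauchy--Riemann equations weakly. Hypoellipticity of the Cauchy--Riemann operator (a Weyl-type lemma; Gray--Morris, Theorem~9, or H\"ormander) then makes $u,v$ analytic with no need for any classical foothold on $f$. After that the bootstrap is run inside Sobolev spaces, not via a $C^5$ reduction: the first derivatives of $u$ together with $p_1=p_2=0$ are linear in the third derivatives of $f$ with an invertible coefficient matrix, so \eqref{eq:14} expresses all third derivatives of $f$ through $\nabla u$ and second derivatives of $f$, giving $f\in W^{4,1}_{\mathrm{loc}}$, and iterating yields $f\in C^\infty$. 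Unless you can substitute a concrete elliptic or divergence structure for your mollification step, some such linearizing observation is indispensable, and your proposal does not contain it.
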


\begin{proof}
Let $u,v:\Omega\to\mathbb R$ be the functions defined as follows:
\begin{equation}\label{eq:02}
\begin{aligned}
u(x,y) &\colonequals\frac{f^{(2,0)}(x,y)-f^{(0,2)}(x,y)}{\left|f^{(0,2)}(x,y) f^{(2,0)}(x,y)-f^{(1,1)}(x,y)^2\right|^{3/4}} \\
v(x,y) &\colonequals\frac{2 f^{(1,1)}(x,y)}{\left|f^{(0,2)}(x,y) f^{(2,0)}(x,y)-f^{(1,1)}(x,y)^2\right|^{3/4}}.
\end{aligned}
\end{equation}
Note that they are well-defined by the assumption that $f$ is generic. Since they depend only on the second-order partial derivatives of $f$, which are assumed to be elements of $W^{1,1}_{\mathrm{loc}}(\Omega)$, and moreover the Hessian determinant of $f$ is locally bounded away from $0$, both functions $u,v$ are elements of $W^{1,1}_{\mathrm{loc}}(\Omega)$. Computing their weak partial derivatives and applying \eqref{eq:05}, one can find out that they satisfy the Cauchy-Riemann equations:
\begin{equation*}\WM{app:04}\begin{aligned}
u^{(1,0)}(x,y)-v^{(0,1)}(x,y)&=\pm\frac{\left(3 f^{(0,2)}(x,y)+f^{(2,0)}(x,y)\right) p_1-2 f^{(1,1)}(x,y) p_2}{4 f^{(0,2)}(x,y) \left|f^{(0,2)}(x,y) f^{(2,0)}(x,y)-f^{(1,1)}(x,y)^2\right|^{7/4}}=0,\\
u^{(0,1)}(x,y)+v^{(1,0)}(x,y)&=\pm\frac{2 f^{(1,1)}(x,y) p_1-\left(3 f^{(2,0)}(x,y)+f^{(0,2)}(x,y)\right) p_2}{4 f^{(2,0)}(x,y) \left|f^{(0,2)}(x,y) f^{(2,0)}(x,y)-f^{(1,1)}(x,y)^2\right|^{7/4}}=0,
\end{aligned}\end{equation*}
where $p_1,p_2$ denote respectively the left-hand sides of \eqref{eq:05} and $\pm$ is the sign of the Hessian determinant. Again, the above formulas are well-defined by the assumption that $f$ is generic. Thus $u,v$ are analytic on $\Omega$ \cite[Theorem~9]{10.2307/2321164}. This is actually a special case of a more general result on the regularity of solutions of hypo-elliptic partial differential equations \cite{Hoermander1961}.\\

Now, observe that the first-order partial derivatives of $u,v$ as well as the left-hand sides of \eqref{eq:05} are linear in third-order partial derivatives of $f$ and thus we can write, e.g.,
$$\begin{pmatrix}u^{(1,0)}\\u^{(0,1)}\\p_1\\p_2\end{pmatrix}=\boldsymbol A\begin{pmatrix}f^{(0,3)}\\f^{(1,2)}\\f^{(2,1)}\\f^{(3,0)}\end{pmatrix},$$
which allows us to express all the third-order partial derivatives of $f$ in terms of the first-order partial derivatives of $u$ and the second-order partial derivatives of $f$. To do this, we only need to verify that the matrix $\boldsymbol A$ is invertible. Indeed, its determinant is equal to
\begin{equation*}\WM{app:04}\pm\frac{4 f^{(0,2)}(x,y) f^{(2,0)}(x,y)}{\left|f^{(0,2)}(x,y) f^{(2,0)}(x,y)-f^{(1,1)}(x,y)^2\right|^{1/2}},\end{equation*}
where $\pm$ is the sign of the Hessian determinant. Therefore we have
\begin{equation}\label{eq:14}\begin{pmatrix}f^{(0,3)}\\f^{(1,2)}\\f^{(2,1)}\\f^{(3,0)}\end{pmatrix}=\boldsymbol A^{-1}\begin{pmatrix}u^{(1,0)}\\u^{(0,1)}\\p_1\\p_2 \end{pmatrix}=\boldsymbol A^{-1}\begin{pmatrix}u^{(1,0)}\\u^{(0,1)}\\0\\0 \end{pmatrix},\end{equation}
where the right-hand side is linear in first-order partial derivatives of $u$ and algebraic in second-order partial derivatives of $f$. Since $\boldsymbol A^{-1}=(\det\boldsymbol A)^{-1}(\adj\boldsymbol A)$, where $\det\boldsymbol A$ is locally bounded away from $0$ and $\adj\boldsymbol A$ is a polynomial in second-order partial derivatives of $f$, we have $\boldsymbol A^{-1}\in W^{1,1}_{\mathrm{loc}}(\Omega)$. It follows that the left-hand side (and hence also the right-hand side) is an element of $W^{1,1}_{\mathrm{loc}}(\Omega)$, which means by the very definition that $f\in W^{4,1}_{\mathrm{loc}}(\Omega)$. Now we are able to weakly differentiate the equalities \eqref{eq:14} and iterate the same argument to see that $f$ is indeed infinitely differentiable on $\Omega$. This ends the proof.
\end{proof}

%%%%%%%%%%%%%%%%%%%%%%%%%%%%%%%%%%%%%%%%%%%%%%%%%%%%%%%%%%%%%%%%%%%%%%
\section{Proofs of the main theorems}
%%%%%%%%%%%%%%%%%%%%%%%%%%%%%%%%%%%%%%%%%%%%%%%%%%%%%%%%%%%%%%%%%%%%%%

Before we move on to the essential part of this section, we will prove the following lemma, which will play a key role in the proofs of both main theorems:

\begin{lemma}\thlabel{lem:02}
Let $f\in W^{3,1}_{\mathrm{loc}}(\Omega)$ be a function defined on a connected open subset $\Omega\subseteq\mathbb R^2$. Suppose that $f$ is generic. Then $f$ satisfies the system of partial differential equations \eqref{eq:05} if and only if its graph is contained in a quadratic surface.
\end{lemma}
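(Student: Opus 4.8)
The plan is to assemble the three ingredients already in place: \thref{pro:01}, which identifies $\mathcal Q(\Omega)^\dagger$ with the differential ideal $\mathfrak a$ generated by \eqref{eq:09}; \thref{lem:03}, which upgrades the regularity of $f$ to $C^\infty$; and the main theorem of \cite{WOLSSON198973}, which converts the vanishing of generalised Wronskians into genuine linear dependence. The lemma is essentially the glue between these, so the proof itself should be short.

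First the easy implication. If the graph of $f$ is contained in a quadratic surface, then $f$ is automatically smooth: solving the quadratic equation \eqref{eq:38} for $f$ locally exhibits $f$ as an algebraic function of $x,y$, and already the assumption $f\in W^{3,1}_{\mathrm{loc}}(\Omega)$ keeps the relevant branch of the square root away from its branch locus (a vanishing discriminant would force a derivative of $f$ to blow up like the $-3/2$ power of the distance to a curve, which is not locally integrable), so $f$ is in fact real-analytic. Hence $f\in\mathcal Q(\Omega)$ in the smooth sense, and by \thref{ass:01} every generalised Wronskian of $\boldsymbol\phi$ vanishes identically on $\Omega$; in particular the Wronskians $W_{2,1}$ and $W_{1,2}$ computed in the proof of \thref{pro:01} vanish, and since genericity makes $f^{(0,2)}$ nowhere zero, dividing out the common factor $72\,{f^{(0,2)}}^2$ leaves exactly the system \eqref{eq:05}.

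For the converse, assume $f$ is generic and a weak solution of \eqref{eq:05}. By \thref{lem:03} we may take $f\in C^\infty(\Omega)$, so \eqref{eq:05} holds classically. Because $f$ is generic, every element of $S^{-1}R$ can be evaluated at $f$ to yield a smooth function on $\Omega$, and this evaluation is a ring homomorphism intertwining $D_x,D_y$ with the two partial differentiations on $C^\infty(\Omega)$. Applied to a generator of $\mathfrak a$ it returns, up to a nowhere-vanishing factor, a left-hand side of \eqref{eq:05}, hence $0$, and applied to any iterated derivative of such a generator it again returns $0$; thus $Tf\equiv 0$ for every $T\in\mathfrak a$. By \thref{ass:01} and \thref{pro:01}, every generalised Wronskian of $\boldsymbol\phi$ lies in $\mathcal Q(\Omega)^\dagger=\mathfrak a$, so every generalised Wronskian of the function tuple $\boldsymbol\phi$ built from this particular $f$ vanishes identically on $\Omega$. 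Applying Wolsson's criterion to the ten functions $\boldsymbol\phi\in C^\infty(\Omega)$ on the connected open set $\Omega\subseteq\mathbb R^2$ then gives that $\boldsymbol\phi$ is linearly dependent over $\mathbb R$; a nontrivial constant relation among its entries is precisely an identity of the form \eqref{eq:38}, i.e. the graph of $f$ is contained in a quadratic surface.

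The delicate point is this last step. In several variables the passage from vanishing generalised Wronskians to linear dependence is genuinely subtler than Bôcher's one-variable theorem, so one must check carefully that Wolsson's hypotheses hold: his argument needs $f$ differentiable of sufficiently high order, which is exactly what \thref{lem:03} provides, and, since that argument calls on only finitely many generalised Wronskians of bounded order, class $C^5$ would already suffice — one could even get away with the four Wronskians $W_{3,0},W_{2,1},W_{1,2},W_{0,3}$ (together with $W_{0,4}\equiv 0$) of the remarks following \thref{pro:01}. Depending on the exact form of Wolsson's statement one may also need to confirm that the linear relation is obtained on all of the connected set $\Omega$ rather than merely on an open subset; the rest is bookkeeping, the mathematical content having been placed in \thref{pro:01} and \thref{lem:03}.
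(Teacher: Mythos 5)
Your structure for the easy direction and the reduction of the converse to smoothness (\thref{lem:03}) plus ideal membership (so that every generalised Wronskian of $\boldsymbol\phi$, lying in $\mathfrak a=\mathcal Q(\Omega)^\dagger$, is annihilated by $f$) matches the paper. But the converse has a genuine gap at exactly the point you flag as ``delicate'' and then wave away: Wolsson's theorem is \emph{not} the statement that vanishing of all generalised Wronskians of smooth functions on a connected open set implies linear dependence --- that implication is false in general, which is precisely why the paper cites Peano-type counterexamples and needs ``specific arguments.'' The hypothesis of \thref{lem:01} that actually has to be verified is not regularity or the finiteness of the family of Wronskians, but that $\Omega$ consists of critical points of $\boldsymbol\phi$ of the \emph{same positive order} $r$. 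Your proposal never establishes this, and without it the conclusion of linear dependence simply does not follow from the vanishing of the $10\times 10$ Wronskians.

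This verification is the substantive content of the paper's proof: one shows that every $\mathbf t\in\Omega$ is a critical point of order exactly $9$, i.e.\ that some $9\times 9$ generalised sub-Wronskian of $\boldsymbol\phi$ is non-zero at $\mathbf t$. The paper does this by showing that the $9\times 10$ matrix $\mathbf W$ built from $\boldsymbol\phi,\boldsymbol\phi^{(0,1)},\ldots,\boldsymbol\phi^{(1,2)},\boldsymbol\phi^{(0,4)}$ has full rank everywhere: if all its relevant $9\times 9$ minors vanished, then from $W_6=4 f^{(0,2)}\bigl(3 f^{(0,2)} f^{(0,4)}-4 {f^{(0,3)}}^2\bigr)=0$ one gets $f^{(0,4)}=4{f^{(0,3)}}^2/(3f^{(0,2)})$, then $W_5=-24{f^{(0,2)}}^3 f^{(0,3)}=0$ forces $f^{(0,3)}=0$, and then $W_4=36{f^{(0,2)}}^5\neq 0$ gives a contradiction, using genericity ($f^{(0,2)}$ nowhere zero) throughout. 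Your proposal needs this (or an equivalent) argument; also note that the candidate minors must be genuine generalised \emph{sub}-Wronskians in Wolsson's sense (the derivative-order condition), not arbitrary $9\times 9$ minors, so the choice of rows matters. The remaining points you raise (that $C^5$ suffices, that the relation holds on all of connected $\Omega$) are indeed minor; the order condition is not.
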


\begin{proof}
Left implication $(\impliedby)$ follows immediately from \thref{pro:01}. A proof of the right implication $(\implies)$ is not so straightforward, since we want to deduce a linear dependence of a set of functions from the vanishing of their generalized Wronskians, which fails to be true in general \cite{Peano} and therefore needs specific arguments.\\

Again, we adopt the notation from \cite{WOLSSON198973}.

\begin{definition}[{\cite[Definition~2]{WOLSSON198973}}]
A \emph{critical point} of $\boldsymbol\phi$ is a point of the domain at which all generalized Wronskians of $\boldsymbol\phi$ vanish.
\end{definition}

\begin{definition}[{\cite[Definition~3]{WOLSSON198973}}]
An $r\times r$ \emph{generalised sub-Wronskian} of $\boldsymbol\phi$, $1\leq r\leq n$, is a generalised Wronskian of any subsequence of $\boldsymbol\phi$.
\end{definition}

Note that not every minor of a generalized Wronskian is a generalized sub-Wronskian. Indeed, the above definition requires that the minor also satisfy the additional condition for orders of partial derivatives.

\begin{definition}[{\cite[Definition~4]{WOLSSON198973}}]
The \emph{order} of a critical point $t$ of $\boldsymbol\phi$ is the largest positive integer $r$ for which some $r\times r$ generalized sub-Wronskian of $\boldsymbol\phi$ is not zero at $t$. Should all sub-Wronskians vanish at $t$, the order is defined to be zero.
\end{definition}

We will show that every $t\in\Omega$ is a critical point of $\boldsymbol\phi$ of order $9$. First, observe that all $10\times 10$ generalized Wronskians of $\boldsymbol\phi$ vanish identically on $\Omega$. Indeed, from \thref{lem:03} we infer that $f$ is smooth and thus all its generalized Wronskians are well-defined. Moreover, by \thref{ass:01} they belong to $\mathcal Q(\Omega)^\dagger$ and hence they vanish identically on $\Omega$ since both generators of $\mathcal Q(\Omega)^\dagger$ do.\\

We are left to prove that for every $t\in\Omega$ there exists a $9\times 9$ generalized sub-Wronskian of $\boldsymbol\phi$ that is non-zero at $t$. Observe that, i.a., every $9\times 9$ minor of the following $9\times 10$ matrix
$$\boldsymbol W\colonequals\begin{pmatrix} \boldsymbol\phi \\ \boldsymbol\phi^{(0,1)} \\ \boldsymbol\phi^{(1,0)} \\ \boldsymbol\phi^{(0,2)} \\ \boldsymbol\phi^{(1,1)} \\ \boldsymbol\phi^{(2,0)} \\ \boldsymbol\phi^{(0,3)} \\ \boldsymbol\phi^{(1,2)} \\ \boldsymbol\phi^{(0,4)} \end{pmatrix}$$
that comprises the first row is a valid sub-Wronskian of $\boldsymbol\phi$ and thus it is enough to show that $\boldsymbol W=\boldsymbol W(t)$ has full rank at every $t\in\Omega$. Denote by $W_i$ the minor of $\boldsymbol W$, obtained by deleting the \nth{i} column, and suppose that all $W_i$ are zero. A direct computation shows that
\begin{equation*}\WM{app:01}W_6=4 f^{(0,2)} \left(3 f^{(0,2)} f^{(0,4)}-4 f^{(0,3) 2}\right),\end{equation*}
which implies
$$f^{(0,4)}=\frac{4 f^{(0,3) 2}}{3 f^{(0,2)}}.$$
Applying the above result to the definition of $W_5$ yields
\begin{equation*}\WM{app:01}W_5=-24 f^{(0,2) 3} f^{(0,3)}\end{equation*}
and consequently
$$f^{(0,3)}=0.$$
It follows that
\begin{equation*}\WM{app:01}W_4=36 f^{(0,2) 5},\end{equation*}
which, finally, gives us the desired contradiction.\\

We are now at a point where we can apply the following fundamental theorem:

\begin{lemma}[{\cite[Theorem~2]{WOLSSON198973}}]\thlabel{lem:01}
If $G$ is an open connected set consisting of critical points of the same order $r>0$, then $\boldsymbol\phi$ has a linearly independent subset $S_r=\{\phi_1,\ldots,\phi_r\}$, say, which is a basis of $\mathrm{span}(\boldsymbol\phi)$, and consequently $\boldsymbol\phi$ is linearly dependent on $G$.
\end{lemma}

By \thref{lem:01} we know that $\boldsymbol\phi$ is linearly dependent on $\Omega$. This concludes the proof.
\end{proof}

\begin{remark}
Observe that the system of partial differential equations \eqref{eq:05} is satisfied if and only if a pair of functions \eqref{eq:02} satisfies the Cauchy-Riemann equations. Thus, the graph of $f$ is contained in a quadratic surface if and only if $u+iv$ is holomorphic. Moreover, if $f$ satisfies \eqref{eq:38}, then a direct computation shows that $u+iv$ is simply a quadratic polynomial:
\begin{equation}\label{eq:15}\WM{app:04}u+iv=\frac{((Q_{1,1}-Q_{2,2})-2iQ_{1,2})+2(Q_{1,4}-iQ_{2,4})z+Q_{4,4}z^2}{|\det\boldsymbol Q|^{3/4}},\end{equation}
where
\begin{equation}\label{eq:04}\boldsymbol Q\colonequals\begin{pmatrix}a_{11} & \frac{1}{2}a_{12} & \frac{1}{2}a_{13} & \frac{1}{2}b_1 \\[.3em] \frac{1}{2}a_{12} & a_{22} & \frac{1}{2}a_{23} & \frac{1}{2}b_2 \\[.3em] \frac{1}{2}a_{13} & \frac{1}{2}a_{23} & a_{33} & \frac{1}{2}b_{3} \\[.3em] \frac{1}{2}b_1 & \frac{1}{2}b_2 & \frac{1}{2}b_3 & c \end{pmatrix}\end{equation}
is a symmetric matrix defining an affine quadratic form \eqref{eq:38} and $Q_{i,j}$ is the $(i,j)$ minor of $\boldsymbol Q$, i.e., the determinant of the submatrix formed by deleting the \nth{i} row and \nth{j} column.
\end{remark}

\begin{remark}
Since a quadratic surface is uniquely determined by $9$ parameters and the quadratic polynomial \eqref{eq:15} has only $5$ parameters, a natural question arises: Which functions correspond to the same quadratic polynomial? Note that $u,v$ depend only on second-order partial derivatives of $f$, which means that adding linear terms does not change \eqref{eq:15}. For completeness, we still need one more parameter. Careful inspection of \eqref{eq:15} shows, e.g., that every function of the form
$$f(x,y)\colonequals a\sqrt{1-ax^2-ay^2}+bx+cy+d$$
gives rise to the same quadratic polynomial $u+iv=z^2$. Unfortunately, the general answer is far more complicated and will not be given here.
\end{remark}

Finally, we still will need one more simple fact, which can be verified by a direct computation:

\begin{assertion}[{\cite[Theorem~1]{2203.04082}}]\thlabel{ass:02}
Let $f:\mathbb R^2\supset\Omega\to\mathbb R$ be a function of class $C^2$ defined on an open subset of $\mathbb R^2$ and satisfying a quadratic equation \eqref{eq:38}. Then the following formula holds:
\begin{equation*}\WM{app:04}H_f(x,y)\cdot\Delta_f(x,y)^2=-16\det\boldsymbol Q,\end{equation*}
where $H_f$ is the Hessian determinant of $f$, $\Delta_f$ is the discriminant of \eqref{eq:38} with respect to the variable $f$ and $\boldsymbol Q$ is just as defined in \eqref{eq:04}.\hfill\ensuremath\blacksquare
\end{assertion}

Now we are ready to prove \thref{thm:01}.

\begin{proof}[Proof of Theorem~$\ref{thm:01}$]
Define $U\subseteq\Omega$ to be the subset consisting of those points, where the Hessian determinant of $f$ is positive. Note that $U$ is open, which immediately follows from the continuity of partial derivatives. Moreover, the assumption on $f$ asserts that it is also non-empty.\\

First, we will show the right implication $(\implies)$. Since $f\vert_U$ is generic, by \thref{lem:02} its graph is contained in a quadratic surface. Let $t\in\Omega$ be a limit point of $U$, i.e., such that there exists a sequence $(t_n)$ of points in $U$ whose limit is $t$. Now, if $\Delta_f$ vanishes identically on $U$, then $f$ is affine, a contradiction. Thus there exists $u\in U$ such that $\Delta_f(u)>0$, which implies $-16\det\boldsymbol Q>0$. Moreover, since $\Delta_f$ is a quadratic polynomial, the sequence $\Delta_f(t_n)^2$ is bounded from above. It follows that $H_f(t_n)=-16\det\boldsymbol Q\cdot\Delta_f(t_n)^{-2}$ is bounded from below by some positive constant $\varepsilon>0$. In particular, we have $H_f(t)\geq\varepsilon>0$ and hence $t\in U$, by the very definition. Thus, we have shown that $U$ contains all its limit points, which makes it closed in $\Omega$. However, recall that $U$ is also open, in which case we have simply $U=\Omega$. This concludes the first part of the proof.\\

The remaining left implication $(\impliedby)$ follows right the same way. Since the graph of $f\vert_U$ is by assumption contained in a quadratic surface, we repeat the above limit point argument to see that likewise $U=\Omega$. With this result at hand, we once again apply \thref{lem:02} to conclude the proof.
\end{proof}

Finally, we are in a position to drop the assumption on the Hessian determinant. It turns out to be important, since \eqref{eq:05} is also satisfied by parametrizations of some ruled surfaces, the Hessian determinant of which is non-positive.

\begin{proof}[Proof of Theorem~$\ref{thm:02}$]
Define the following open sets:
\begin{align*}
\Omega_{\mathrm a}&\colonequals\left\{f^{(0,2)}\neq 0, f^{(2,0)}\neq 0, f^{(0,2)} f^{(2,0)}-f^{(1,1) 2}<0\right\},\\
\Omega_{\mathrm b}&\colonequals\Int\left\{f^{(0,2)} f^{(2,0)}-f^{(1,1) 2}=0\right\},\\
\Omega_{\mathrm c}&\colonequals\Int\left\{f^{(2,0)}=0\right\},\\
\Omega_{\mathrm d}&\colonequals\Int\left\{f^{(0,2)}=0\right\}.
\end{align*}
Clearly their sum $\Omega_{\mathrm a}\cup\Omega_{\mathrm b}\cup\Omega_{\mathrm c}\cup\Omega_{\mathrm d}$ is dense in $\Omega$. By definition, on each connected component of $\Omega_{\mathrm b}$, the graph of $f$ is contained in a developable surface. Moreover, on each connected component of $\Omega_{\mathrm c}$ (respectively: $\Omega_{\mathrm d}$) $f$ is linear along every straight line parallel to the $OX$ (respectively: $OY$) axis and thus, again by definition, its graph is contained in a Catalan surface with directrix plane $XZ$ (respectively: $YZ$). Hence, to prove the right implication $(\implies)$, it remains for us to show that on every connected component of $\Omega_{\mathrm a}$ the graph of $f$ satisfying \eqref{eq:05} is contained in a doubly-ruled surface, which readily follows from \thref{lem:02}. Indeed, we immediately obtain that the graph of $f$ is contained in a quadratic surface of negative Gaussian curvature. The only two are hyperbolic paraboloid and single-sheeted hyperboloid, both of which are doubly-ruled \cite[p.~15]{hilbert1999geometry}. This concludes the first part of the proof.\\

On the other hand, observe that $f\vert_{\Omega_{\mathrm b}}$ automatically satisfies \eqref{eq:05}. Indeed, denote
$$H_f(x,y)\colonequals f^{(0,2)}(x,y) f^{(2,0)}(x,y)-f^{(1,1)}(x,y)^2$$
and observe that
\begin{align*}
p_1 &= -4 f^{(1,2)} H_f+f^{(0,2)} \frac{\partial H_f}{\partial x}+2 f^{(1,1)} \frac{\partial H_f}{\partial y} = 0, \\
p_2 &= -4 f^{(2,1)} H_f+f^{(2,0)} \frac{\partial H_f}{\partial y}+2 f^{(1,1)} \frac{\partial H_f}{\partial x} = 0,
\end{align*}
where $p_1,p_2$ again stand for the left-hand sides of \eqref{eq:05}, respectively. Moreover, $f\vert_{\Omega_{\mathrm c}}$ (respectively: $f\vert_{\Omega_{\mathrm d}}$) satisfies $f^{(2,0)}\equiv 0$ and consequently $f^{(3,0)}\equiv 0$ (respectively: $f^{(0,2)}\equiv 0$ and consequently $f^{(0,3)}\equiv 0$), in which case a simple check shows that it satisfies \eqref{eq:05} as well. Hence to prove the left implication $(\impliedby)$ it remains for us to show that for each $\Omega_i$, $f\vert_{\Omega_i\cap\Omega_{\mathrm a}}$ satisfies \eqref{eq:05}. However, $\Omega_i\cap\Omega_{\mathrm a}$ is non-empty if and only if the graph of $f\vert_{\Omega_i}$ is contained in a doubly-ruled surface of negative Gaussian curvature. The only two are hyperbolic paraboloid and single-sheeted hyperboloid \cite[p.~15]{hilbert1999geometry}, both of which are quadratic. The assertion follows from \thref{lem:02}, which concludes the proof.
\end{proof}

\begin{remark}
Denote by
$$f(t+\boldsymbol u)\equalscolon\sum_{k=0}^3\frac{1}{k!}f_k(t)[\boldsymbol u]+o(\|\boldsymbol u\|^3)$$
the series expansion of $f$ at $t\in\Omega$, where $f_k(t)$ stands for its homogeneous component of degree $k$. Generally, any second-order homogeneous polynomial vanishes on at most two lines in $\mathbb{RP}^1$. Therefore, if the graph of $f$ is contained in a doubly-ruled surface, $f_2(t)$ vanishes exactly on the two rulings that pass through $t$. Moreover, $f_3(t)$ likewise must vanish on the same two rulings. In particular, it follows that $f_2(t)$ divides $f_3(t)$ as a polynomial. So it should come as no surprise to us that, for a generic function $f$, equations \eqref{eq:05} are satisfied if and only if $f_2(t)$ divides $f_3(t)$. Indeed,
$$f_3(t)[\boldsymbol u]=\left(\frac{f^{(3,0)}(t)}{f^{(2,0)}(t)}u_1+\frac{f^{(0,3)}(t)}{f^{(0,2)}(t)}u_2\right)f_2(t)[\boldsymbol u]-\left(\frac{p_2(t)}{f^{(2,0)}(t)f^{(0,2)}(t)}u_1+\frac{p_1(t)}{f^{(2,0)}(t)f^{(0,2)}(t)}u_2\right)u_1u_2,$$
where $p_1,p_2$ again stand for the left-hand sides of \eqref{eq:05}, respectively. Observe that the remainder is a product of $u_1$, $u_2$, and some linear homogeneous polynomial, whereas $f_2(t)[\boldsymbol u]$ is divisible by neither $u_1$ nor $u_2$, which means it can not divide the remainder unless the latter is zero. Thus, we have found yet another way of looking at \eqref{eq:05}. For, in the generic case, it arises as generalized Wronskians of a certain set of functions, as Cauchy-Riemann equations for a certain pair of functions, and now as coefficients of a certain remainder from dividing $f_3(t)$ by $f_2(t)$.
\end{remark}

\begin{remark}
Since the proofs of both theorems were mainly algebraic, the same results hold also in a complex setting, if we assume $f:\mathbb C^2\supseteq\Omega\to\mathbb C$ to be holomorphic. Although the author did not point it out, the same applies likewise to the cited work \cite{WOLSSON198973} concerning generalized Wronskians, which allows us to apply the results in an analogous manner. Only smoothing \thref{lem:03} ceases to make sense, but actually it is not needed anymore.
\end{remark}

Let us conclude our considerations with an improvement of a well-known corollary from the aforementioned theorem of Maschke-Pick-Berwald \cite[Theorem~4.5]{nomizu1994affine}:

\begin{corollary}\thlabel{cor:01}
Let $S\subset\mathbb R^n$, $n\geq 3$, be a convex hypersurface of class $C^{2,1}$ such that for almost every $x\in S$ there is a quadratic hypersurface having third-order contact with $S$ at $x$. Then $S$ is itself a quadratic hypersurface.
\end{corollary}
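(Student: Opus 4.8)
The plan is to pass to local graph coordinates and reduce everything to \thref{thm:01}. Recall that a convex surface $S$ is the boundary of a bounded convex body, hence compact and topologically a sphere; about each of its points it can be written, after a rigid motion, as the graph of a \emph{convex} function $f\in C^3(\Omega)$ over some connected open $\Omega\subseteq\mathbb R^2$, so that the Hessian matrix $\mathbf H_f$ is positive semidefinite on $\Omega$ and in particular $f^{(0,2)}\ge 0$. Two things then have to be shown: first, that each such $f$ is a classical (hence weak) solution of \eqref{eq:05} on all of $\Omega$; second, that the resulting local pieces assemble into a single quadratic surface.

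For the first step, fix $\mathbf t\in\Omega$ and let $\Sigma$ be a quadratic surface with \nth{3} order contact with the graph of $f$ at the point over $\mathbf t$. Sharing the (non-vertical) tangent plane of the graph there, $\Sigma$ is itself a graph near the contact point, say of a function $q$; being a piece of a quadric, $q$ is real-analytic, and \nth{3} order contact means exactly that $q$ and $f$ have the same partial derivatives up to order $3$ at $\mathbf t$. The ten functions obtained from \eqref{eq:01} by putting $q$ in place of $f$ are linearly dependent, since the graph of $q$ lies in a quadratic surface, so as in \thref{ass:01} all their generalised Wronskians vanish identically (their columns being linearly dependent); recalling from the proof of \thref{pro:01} that $W_{2,1}$ and $W_{1,2}$ equal $72\,{\partial^{(0,2)}}^2$ times the left-hand sides $p_1$ and $p_2$ of \eqref{eq:05} respectively, we obtain $72\,{q^{(0,2)}}^2 p_1(q)\equiv 0$ and $72\,{q^{(0,2)}}^2 p_2(q)\equiv 0$. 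Since $p_1$, $p_2$ and $\partial^{(0,2)}$ involve only derivatives of order at most $3$, evaluating at $\mathbf t$ and invoking the contact condition gives $72\,{f^{(0,2)}(\mathbf t)}^2 p_i(f)(\mathbf t)=0$ for $i=1,2$; hence $p_1(f)(\mathbf t)=p_2(f)(\mathbf t)=0$ whenever $f^{(0,2)}(\mathbf t)\neq 0$. If instead $f^{(0,2)}(\mathbf t)=0$, then the non-negative function $f^{(0,2)}$ attains an interior minimum at $\mathbf t$, so $f^{(0,3)}(\mathbf t)=\partial_y f^{(0,2)}(\mathbf t)=0$ as well; since every monomial occurring in $p_1$ or $p_2$ carries a factor $f^{(0,2)}$ or $f^{(0,3)}$, again $p_1(f)(\mathbf t)=p_2(f)(\mathbf t)=0$. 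Thus $f$ solves \eqref{eq:05} throughout $\Omega$.

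For the second step, observe that $S$, being topologically a sphere, has a point of positive Gaussian curvature by the Gauss--Bonnet theorem. Whenever a graph chart $f$ has $\det\mathbf H_f>0$ at even one point, \thref{thm:01} applies (its hypotheses holding by the first step) and puts the graph of $f$ inside a quadratic surface $\Sigma$; as $\Sigma$ then contains a positively curved piece, it must be an ellipsoid, an elliptic paraboloid, or a two-sheeted hyperboloid, each of which is positively curved everywhere, so in fact $\det\mathbf H_f>0$ on the whole chart. It follows that the set of points of $S$ where the Gaussian curvature is positive is both open and closed, hence all of $S$ by connectedness; thus every chart of $S$ lies in a quadratic surface. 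Since two quadratic surfaces agreeing on a two-dimensional piece coincide, these local quadrics glue to a single quadratic surface $\Sigma_0\supseteq S$. Finally $\Sigma_0$ is positively curved, so — being the only positively curved quadric that contains a compact surface without boundary — it is an ellipsoid, and $S=\Sigma_0$ is a quadratic surface.

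The first step is routine: because \eqref{eq:05} is a third-order system it sees nothing of $f$ beyond the third-order jet that $f$ shares with its osculating quadric, so the equations transfer for free, the degenerate locus $\{f^{(0,2)}=0\}$ being absorbed by the elementary convexity remark above. The real obstacle is the globalisation. Away from the curvature-zero set \thref{thm:01} gives nothing, and there \thref{thm:02} genuinely allows non-quadratic ruled surfaces — for instance a cylinder over a convex but non-conic curve, which still osculates a quadric (namely a cylinder) to third order at each of its points — so one must first exclude zeros of the Gaussian curvature on $S$; this is precisely where the compactness of $S$, through Gauss--Bonnet, is indispensable, after which only the rigidity of quadrics is needed to assemble the pieces.
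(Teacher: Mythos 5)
Your proof is correct, and its local core --- transferring \eqref{eq:05} from the osculating quadric to $f$ through the shared $3$-jet at the contact point and then invoking \thref{thm:01} --- is exactly the mechanism of the paper's own proof. What you do around that core differs, and mostly fills in what the paper leaves implicit. First, instead of quoting the \enquote{if} part of \thref{thm:01} for the osculating quadric (which formally carries the curvature hypothesis), you go back to \thref{ass:01} together with the formulas for $W_{2,1}$ and $W_{1,2}$ computed in the proof of \thref{pro:01}, which exhibit them as $72\,{f^{(0,2)}}^2$ times the left-hand sides of \eqref{eq:05}; this is a polynomial identity, so it needs no genericity of the quadric, and you dispose of the locus $f^{(0,2)}=0$ by the interior-minimum argument available for convex charts --- degenerate points the paper's proof does not address. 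Second, you actually verify the hypothesis of \thref{thm:01} for the chart function (positivity of the Hessian determinant somewhere) via compactness and Gauss--Bonnet, propagate positivity through the classification of positively curved quadrics, and then glue the chartwise quadrics into a single ellipsoid equal to $S$; the paper's argument is purely local, concluding only that the graph of one chart lies in a quadric, with both the curvature hypothesis and the globalisation left tacit. Your cylinder-over-a-non-conic-convex-curve remark is a pertinent justification for reading \enquote{convex surface} as the boundary of a bounded convex body: without some global positivity input the purely local statement fails, so this step is not optional. Two points deserve a word of polish, though neither is a gap: the blanket claim that two quadrics agreeing on a two-dimensional piece coincide requires irreducibility, which your quadrics (being positively curved) do have; and the osculating quadric should be taken smooth at the contact point so that it is locally a graph, an assumption the paper makes tacitly as well.
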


\begin{proof}[Proof of \thref{cor:01} for $n=3$]
Define $f:\mathbb R^2\supseteq U\to\mathbb R$ to be a function such that its graph contains an open subset of $S$. Fix $x\in U$ such that $f$ is thrice differentiable at $x$ and define $g:\mathbb R^2\supseteq V\to\mathbb R$ to be a parametrization of a quadratic surface having third-order contact with $S$ at $x$. It follows from the \enquote{if} part of \thref{thm:01} that $g$ satisfies \eqref{eq:05} at $x$. Moreover, by assumption, we have the equality of jets $J_x^3g=J_x^3f$ and hence $f$ likewise satisfies \eqref{eq:05} at $x$. Now, since $x$ was arbitrary, it means that $f$ satisfies \eqref{eq:05} almost everywhere on the domain $U$, and finally, from the \enquote{only if} part of \thref{thm:01}, we obtain that its graph is contained in a quadratic surface. This concludes the proof.
\end{proof}

\begin{proof}[Proof of \thref{cor:01} for $n\geq 4$]
Let $H$ be any $3$-dimensional affine flat that intersects $S$, and consider the foliation of $\mathbb R^n$ with $3$-dimensional affine flats parallel to $H$. By Fubini's theorem, almost all of the leaves satisfy the assumption of \thref{cor:01} for $n=3$, whence $S\cap H$ can be approached by a sequence of quadratic surfaces $S\cap H_n$, and thus it must be a quadratic surface itself. Now, since $H$ was arbitrary, it follows that every section of $S$ by a $3$-dimensional affine flat, and thus also by a $2$-dimensional affine flat, is contained in some quadric.\\

Finally, let $x\in S$ be any point on the hypersurface, let $p,q\in S$ be other two points on the hypersurface that are sufficiently close to $x$, and let $Q$ be the unique quadratic hypersurface having first-order contact with $S$ at $p$ and second-order contact with $S$ at $q$. Every section of $S$ by a $2$-dimensional flat $H$ passing through $p,q$ is contained in the unique quadratic curve having first-order contact with $S$ at $p$ and second-order contact with $S$ at $q$, which is necessarily a section of $Q$. Hence $S\cap H\subseteq Q\cap H$ for every $2$-dimensional flat $H$ passing through $p,q$. Since such flats foliate a neighbourhood of $x$, $S$ is locally a quadratic hypersurface, coinciding with $Q$. Because $x$ was arbitrary, this concludes the proof.
\end{proof}

The above differential characterization of quadratic hypersurfaces is expressed in the language of differential geometry rather than differential equations. Unlike \thref{thm:01}, the assumption of \thref{cor:01} is clearly invariant under an affine change of coordinate system. Importantly, it requires only that the hypersurface be of class $C^{2,1}$, whereas the theorem of Maschke-Pick-Berwald, as formulated in \cite[Theorem~4.5]{nomizu1994affine}, requires the hypersurface to be smooth, and its proof indeed relies on higher-order tensors.

%%%%%%%%%%%%%%%%%%%%%%%%%%%%%%%%%%%%%%%%%%%%%%%%%%%%%%%%%%%%%%%%%%%%%%
\subsection*{Data availability}
%%%%%%%%%%%%%%%%%%%%%%%%%%%%%%%%%%%%%%%%%%%%%%%%%%%%%%%%%%%%%%%%%%%%%%

The data that support the findings of this study are available from a public institutional repository, \url{https://drive.google.com/drive/folders/1-pCxC69RI0g10bmWQdHiZ26d3faswKYX?usp=sharing}.

%%%%%%%%%%%%%%%%%%%%%%%%%%%%%%%%%%%%%%%%%%%%%%%%%%%%%%%%%%%%%%%%%%%%%%
\section*{Acknowledgements}
%%%%%%%%%%%%%%%%%%%%%%%%%%%%%%%%%%%%%%%%%%%%%%%%%%%%%%%%%%%%%%%%%%%%%%

I would like to thank my doctoral advisor, Prof. Michał Wojciechowski, for his breakthrough ideas, insistence on giving a profound background to all the intermediate results, help in completing all the details, and the particularly high amount of effort spent on editorial work.

\begin{appendix}
\section{Computer assistance in symbolic computations}\label{app:00}
All functions were implemented in \textsf{Wolfram Mathematica} \cite{Mathematica}. The code itself is available from a public institutional repository, \url{https://drive.google.com/drive/folders/1-pCxC69RI0g10bmWQdHiZ26d3faswKYX?usp=sharing}. Computations were performed on a \textsf{Linux x86 (64-bit)} machine with a single \textsf{Intel® Xeon® CPU E5-2697 v3} processor and \textsf{64GB} memory. The total execution time was negligible.

\subsection{\fox~\textsf{Notebook-1.nb}}\label{app:01}
In the beginning, we use symbolic differentiation \textsf{D} to obtain the Wronskian matrix of \eqref{eq:01}. Afterward, we use \textsf{Minors} to compute $210$ symbolic determinants of order $4$ and thus find out that the four rows corresponding to third-order partial derivatives are indeed linearly dependent. Then we again use \textsf{Minors} to compute $11$ symbolic determinants of order $10$, among which the only non-trivial ones are $W_{3,0}$, $W_{2,1}$, $W_{1,2}$ and $W_{0,3}$. Finally, we use \textsf{Minors} to compute $10$ symbolic determinants of order $9$ and select the simplest-looking ones. Based on them, we solve some simple linear equations to find out that all the featured minors can not vanish simultaneously. Performing the same calculations with pen and paper would be tedious, but possible. Although we sometimes applied \textsf{Factor} to factorize the results, in all cases the factorization turned out to be trivial.

\subsection{\fox~\textsf{Notebook-2.nb}}\label{app:02}
In the beginning, we use symbolic differentiation \textsf{D} to compute the left-hand side of \eqref{eq:10}. Afterward, we use \textsf{CoefficientArrays} to extract the explicit form of the matrix $\boldsymbol A_5$ of order $6$. Then we apply \textsf{Det} and \textsf{Factor} to obtain its determinant in the form from which we can readily see that it is an element of the multiplicative submonoid $S$. Finally, we repeat the same steps for $\boldsymbol A_4$ of order $5$ and $\boldsymbol A_3$ of order $4$. The same calculations could well be done with pen and paper, though it is pointless.

\subsection{\fox~\textsf{Notebook-3.nb}}\label{app:03}
At the beginning, we solve the quadratic equation \eqref{eq:38} for $f$, assuming previously that $a_{33}=1$. Afterward, we use symbolic differentiation to obtain the explicit formula for $\boldsymbol\psi$. Then we use \textsf{Grad} to compute the Jacobian matrix of $\boldsymbol\psi$ with respect to the $11$-dimensional vector \eqref{eq:03}. Now, since its symbolic determinant is difficult to compute even for a supercomputer, we instantiate the matrix at \eqref{eq:11} and only then do we apply \textsf{Det} and \textsf{Factor} to obtain its determinant of order $11$ in the simplest form. The content of this notebook is by far the most demanding computational task because, in addition to the heavy workload, it also requires manipulating algebraic expressions containing square roots.

\subsection{\fox~\textsf{Notebook-4.nb}}\label{app:04}
In the beginning, we define $p_1$, $p_2$, $u$, $v$, and verify that $u$, $v$ satisfy \eqref{eq:02}, which requires symbolic differentiation and manipulating algebraic expressions containing square roots. Afterward, we use \textsf{CoefficientArrays} to extract the explicit form of the matrix $\boldsymbol A$ of order $4$. Then we apply \textsf{Det} and \textsf{Together} to obtain its determinant in the simplest form. Further, we solve the quadratic equation \eqref{eq:38} for $f$ and then put the result into the formula for $u+iv$. We apply \textsf{Together} and \textsf{PowerExpand} to bring the result to a simpler form. Finally, we define the matrix $\boldsymbol Q$ of order $4$ and verify the formula \eqref{eq:15}, using \textsf{Minors} to compute $16$ symbolic determinants of order $3$ along the way. Then we apply \textsf{Together} to force the expansion of the underlying expression. At the very end, we verify \thref{ass:02}, using symbolic differentiation \textsf{D} composed with \textsf{Det} to obtain the Hessian determinant of $f$ and \textsf{Discriminant} to compute the discriminant of \eqref{eq:38} with respect to the variable $f$. Again, we apply \textsf{Together} to force the expansion of the underlying expression. The same calculations could well be done with pen and paper, though it is pointless.
\end{appendix}

\bibliography{references}{}
\bibliographystyle{amsplain}

\end{document}